\DeclareSymbolFont{cyrletters}{OT2}{wncyr}{m}{n}
\DeclareMathSymbol{\Sha}{\mathalpha}{cyrletters}{"58}
 \newtheorem{thm}{Theorem}[section]
 \newtheorem*{thm*}{Theorem}
 \newtheorem{lem}[thm]{Lemma}
 \newtheorem{conj}[thm]{Conjecture}
 \newtheorem{rmk}[thm]{Remark}
 \theoremstyle{definition}
 \theoremstyle{remark}
 \numberwithin{equation}{section}
\newcommand{\sm}{\left(\begin{smallmatrix}}
\newcommand{\esm}{\end{smallmatrix}\right)}
\newcommand{\mat}{\left(\begin{matrix}}
\newcommand{\emat}{\end{matrix}\right)}
\def\CC{\mathbb{C}}
\def\Q{\mathbb{Q}}
\def\ZZ{\mathbb{Z}}
\def\det{\mathrm{det}}
\def\Frob{\mathrm{Frob}}
\def\Gal{\mathrm{Gal}}
\def\Sh{\mathrm{Sh}}
\def\tr{\mathrm{tr}}
\def\GL{\mathrm{GL}}
\def\SL{\mathrm{SL}}
\begin{document}

\title[Newman's conjecture for the partition function]{Newman's conjecture for the partition function modulo integers with at least two distinct prime divisors}

\author{Dohoon Choi}
\author{Youngmin Lee}

\address{Department of Mathematics, Korea University, 145 Anam-ro, Seongbuk-gu,  Seoul 02841, Republic of Korea}
\email{dohoonchoi@korea.ac.kr}	

\address{School of Mathematics, Korea Institute for Advanced Study, 85 Hoegiro, Dongdaemun-gu,
Seoul 02455, Republic of Korea}
\email{youngminlee@kias.re.kr}

\subjclass[2020]{11F80, 11P83}

\thanks{Keywords: partition function, Galois representations, Newman's Conjecture}

\begin{abstract}
Let $M$ be a positive integer and $p(n)$ be the number of partitions of a positive integer $n$.
Newman's Conjecture asserts that for each integer $r$, there are infinitely many positive integers $n$ such that
\[ p(n)\equiv r \pmod{M}. \]
For a positive integer $d$, let $B_{d}$ be the set of positive integers $M$ such that the number of prime divisors of $M$ is $d$.
In this paper, we prove that for each positive integer $d$, the density of the set of positive integers $M$ for which Newman's Conjecture holds in $B_{d}$ is $1$. Furthermore, we study an analogue of Newman's Conjecture for weakly holomorphic modular forms on $\Gamma_0(N)$ with nebentypus, and this applies to $t$-core partitions and generalized Frobenius partitions with $h$-colors.
\end{abstract}	
\maketitle
\section{Introduction}\label{s : intro}
A sequence of positive integers $\lambda=(\lambda_1,\dots, \lambda_i)$ is called a partition of a positive integer $n$ if $\lambda_1\geq\dots \geq \lambda_i$ and $\sum_{j=1}^{i} \lambda_j=n$.
Let $p(n)$ denote the number of partitions of $n$.
For example, $p(4)=5$ since
\[ 4=3+1=2+2=2+1+1=1+1+1+1.  \]
For convenience, let $p(0):=1$. In \cite{RA1, RA2, RA}, Ramanujan found the following three congruences for $p(n)$: for all non-negative integers $n$,
\[ p(5n+4)\equiv 0 \pmod{5}, \]
\[ p(7n+5)\equiv 0 \pmod{7}, \]
\[ p(11n+6)\equiv 0 \pmod{11}.\] 
These congruences have inspired many works on the arithmetic properties of $p(n)$: for example \cite{AH,AB1,AB2,AN2,AG,AT,AO2,AS,BO,FKO,KO1,KL,KO,LO,NE,ON1}. Especially,  Ono \cite{ON1} proved that if $\ell$ is a prime larger than or equal to $5$, then there are infinitely many non-nested arithmetic progressions $\{an+b\}$ such that for all non-negative integers $n$,
$$p(an+b)\equiv 0 \pmod{\ell}.$$
This reestablishes a conjecture of Erd\H{o}s and Ivi\'{c} \cite{EI}, and gives an answer to a conjecture of Erd\H{o}s.
Erd\H{o}s and Ivi\'{c} \cite{EI} conjectured that there are infinitely many primes $\ell$ dividing some value of the partition function, and Schinzel established the Erd\H{o}s and Ivi\'{c} conjecture by the Hardy-Ramanujan-Rademacher asymptotic formula for $p(n)$ (see \cite{EI} for the proof). Erd\H{o}s suggested a strong conjecture in his private communications with Gordon and Ivi\'{c} that
if $\ell$ is a prime, then there exists a non-negative integer $n_{\ell}$ such that
$$
p(n_{\ell})\equiv 0 \pmod{\ell}.
$$

In this vein, Newman \cite{NE} asserted the following conjecture for general residue classes modulo a positive integer.

\begin{conj}[Newman's Conjecture]\label{conj : conj 1}
Let $M$ be a positive integer.
Then, for each integer $r$, there are infinitely many positive integers $n$ such that
\[ p(n)\equiv r\pmod{M}. \]
\end{conj}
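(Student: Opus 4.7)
The plan is to attack Newman's conjecture via modular forms and mod-$\ell^k$ Galois representations. The starting point is the classical identity
\[ q \sum_{n \geq 0} p(n) q^{24n} = \frac{1}{\eta(24z)}, \]
which realizes the partition generating function as a weakly holomorphic modular form of weight $-\tfrac{1}{2}$ on $\Gamma_0(576)$ with a specific nebentypus. Since by the Chinese remainder theorem a joint congruence $p(n) \equiv r \pmod{M}$ can be assembled from simultaneous congruences modulo each prime power dividing $M$, the core problem is to understand the joint distribution of $p(n) \bmod \ell^k$ for each prime power $\ell^k \mid M$.

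For a given prime power $\ell^k$, the strategy is to multiply $1/\eta(24z)$ by a suitable Eisenstein-like factor congruent to $1$ modulo $\ell^k$ (Serre's $E_{\ell-1}$ when $\ell \geq 5$, and refined constructions of Chen--Kiming, Treneer, or Boylan for prime-power congruences), and then apply $U_\ell$-style sieving to reduce the residues $p(n) \pmod{\ell^k}$ in fixed arithmetic progressions to Fourier coefficients of a holomorphic cusp form $f$ of positive integer weight on some $\Gamma_0(N\ell^r)$. After a Deligne--Serre style argument, each eigenform constituent of $f \bmod \ell^k$ carries a Galois representation
\[ \rho_{f,\ell^k} \colon \Gal(\overline{\Q}/\Q) \to \GL_2(\ZZ/\ell^k), \]
and the Chebotarev density theorem converts the distribution of $a_n(f) \bmod \ell^k$ into a problem about the image of the trace map on the image of $\rho_{f,\ell^k}$. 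Hitting every residue $r \pmod{\ell^k}$ then reduces to a surjectivity statement for this trace image, combined with a joint equidistribution argument across the different $\ell \mid M$ to promote the prime-power statements to a statement modulo $M$.

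The hard part will be precisely the trace-surjectivity step. For a ``generic'' $\ell$ the image of $\rho_{f,\ell^k}$ contains $\SL_2(\ZZ/\ell^k)$, and surjectivity of the trace is automatic; this is what allows the density-$1$ conclusion proved in this paper. For exceptional primes, however, the image can be abnormally small---CM or dihedral projections, reducible cases, or $\ell \in \{2,3\}$ where integer-weight mod-$\ell$ modular form theory (Serre's filtration, theta cycles) is notoriously delicate---and in those cases the trace image of a single eigenform can genuinely omit some residue classes. To close the gap one would have to show that the \emph{entire} space of relevant modular forms mod $\ell^k$, summed over its many eigenform constituents, collectively hits every residue, perhaps by exploiting congruences between distinct newforms, Jochnowitz-style filtration arguments, or additional Hecke operators that mix the exceptional components with generic ones. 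Overcoming these small-image obstructions at $\ell \in \{2,3\}$ and at the sporadic exceptional primes hidden inside the density-$1$ theorem is, in my view, the central obstacle to a complete proof of Newman's conjecture.
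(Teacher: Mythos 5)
The statement you are asked to prove is stated in the paper as a \emph{conjecture}, and the paper does not prove it: what it actually establishes is that Newman's conjecture holds for a density-one subset of the moduli $M$ with a fixed number $d$ of prime divisors (Theorem \ref{thm : thm 2}), via Theorem \ref{thm : thm of weak} applied to $1/\eta(24z)$. Your proposal is likewise not a proof --- you say so yourself in the last paragraph --- so the honest verdict is that neither you nor the paper closes the conjecture, and your text should not be presented as a proof of the statement as given.

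That said, your diagnosis of where the difficulty lies does not match the actual mechanism of the paper, and this is worth correcting. You propose to control the full distribution of $a(n) \bmod \ell^k$ through surjectivity of the trace on the image of a mod-$\ell^k$ Galois representation, and you identify small-image (CM, reducible, exceptional) primes as the central obstacle. The paper needs nothing so strong. Following Bruinier--Ono, it only needs Chebotarev to produce a positive proportion of primes $p$ for which $\Frob_p$ acts as the \emph{identity} on the fields cut out by all the relevant residual representations simultaneously (Lemma \ref{lem : main lem 1}); this forces $f_{\ell}|T_{p^2}\equiv 2f_{\ell} \pmod{\ell^{e(\ell)}}$ for every $\ell\mid M$, and the resulting recursion gives $a(n_0p^{2e})\equiv (2e+1)a(n_0)\pmod{M}$, which sweeps out every residue class as soon as $2a(n_0)$ is a unit mod $M$. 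Hitting the identity conjugacy class is available for \emph{any} image, large or small, so exceptional images are not the obstruction. The genuine losses in the paper's method are elsewhere: $M$ must be odd and coprime to the level and to a nonvanishing coefficient $a(n_0m^2)$ with $n_0\nmid N$; and Treneer's construction of the auxiliary cusp forms requires the Kronecker symbols $\left(\frac{r_i}{\ell}\right)$ to agree across the principal parts at all cusps, which is what produces the factor $\left(\frac{1}{2}\right)^{d(s(f)-1)}$ in Theorem \ref{thm : thm of weak} (harmless for $1/\eta(24z)$ since $s(f)=1$, but the reason the general result is only a density statement). If you want to push toward the full conjecture, the primes you need to worry about are not those with small Galois image but those excluded by these coprimality and quadratic-residue conditions --- above all $\ell=2$ and $\ell=3$, which the half-integral-weight machinery (Shimura lift, the forms $F_\ell$ of \eqref{e : def of F_p}) does not reach at all.
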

Kolberg \cite{KO} proved that Newman's Conjecture holds for $M=2$, and Newman \cite{NE} showed that it holds for $M=2,5$ and $13$. For other integers $M$, it was proven that Newman's Conjecture holds for 
$M=7$ by Atkin \cite{AT} and Kl\o{}ve \cite{KL}, and for $M=17,19,29$ and $31$ by Kl\o{}ve \cite{KL}.
For each prime $M\geq 5$, Ono \cite{ON1} proved that if for each integer $r$, there is a positive integer $n_r$ such that
\begin{equation*}
    p\left(\frac{Mn_r+1}{24}\right)\equiv r \pmod{M},
\end{equation*}
then
\begin{equation}\label{e : ono_bound}
     \#\{ n\leq X : p(n)\equiv r \pmod{M} \} \gg_{M} \begin{cases}
X \quad &\text{if }r=0,\\
\frac{\sqrt{X}}{\log X} \quad &\text{if }r\neq 0.
\end{cases}
\end{equation}
From this, he verified that Newman's Conjecture holds for every prime $M$ such that $M<1000$ with the exception of $M=3$.
The bounds \eqref{e : ono_bound} were generalized by Ahlgren \cite{AH} to positive integers $M$ relatively prime to $6$.
From these results, a proof of Newman's Conjecture for $M$ relatively prime to $6$ can be obtained by showing that for each integer $r$, there is a positive integer $n$ such that
\begin{equation}\label{e : eq of newman}
    p(n)\equiv r\pmod{M}.
\end{equation}

In \cite{BO}, Bruinier and Ono proved that if $M$ is a prime with $M>11$, and there is a positive integer $n$ such that
\begin{equation}\label{e : cond of b-o}
     p\left(\frac{Mn+1}{24}\right)\not\equiv 0\pmod{M},
\end{equation}
then for each integer $r$, there is a positive integer $n$ satisfying \eqref{e : eq of newman}.
The existence of a positive integer $n$ satisfying (\ref{e : cond of b-o}) was conjectured by Ramanujan \cite{RA2}. This conjecture was proved by Ahlgren and Boylan \cite{AB1}, and for primes $M \geq5$, they proved that Newman's Conjecture holds for $M$.
Furthermore, Ahlgren and Boylan \cite{AB2} obtained bounds for the weights of modular forms of half-integral weight having the Fourier expansion of special forms which were studied by Bruinier and Ono \cite{BO}. As a result, they proved that Newman's Conjecture holds for $M$ that is a power of a prime larger than or equal to $5$.

For $M$ having at least two prime divisors, to the best of the authors' knowledge, it is not known whether there are infinitely many $M$ such that Newman's Conjecture holds for $M$. We state our main theorem on the density of positive integers $M$ for which Newman's Conjecture holds.
\begin{thm}\label{thm : thm 2}
Assume that $\mathcal{N}$ is the set of positive integers $M$ such that for each integer $r$,
\[ \#\{ n\leq X : p(n)\equiv r \pmod{M} \} \gg_{M} \begin{cases}
X \quad &\text{if }r=0,\\
\frac{\sqrt{X}}{\log X} \quad &\text{if }r\neq 0.
\end{cases}\]
Assume that for each positive integer $d$, $B_{d}$ is the set of positive integers $M$ such that the number of prime divisors of $M$ is $d$. Then,
\[ \lim_{X\to \infty} \frac{\#\{M\leq X :  M\in B_{d} \cap \mathcal{N} \}}{\#\{ M\leq X : M\in B_{d} \}}=1. \]
\end{thm}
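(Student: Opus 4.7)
The plan is to (i) use a counting argument to reduce to $M$ coprime to $6$, (ii) invoke Ahlgren's bound to convert the problem into pure surjectivity of $n\mapsto p(n)\bmod M$, and (iii) establish this joint surjectivity for density-$1$ of $M\in B_d$ by a Galois-theoretic argument building on Bruinier-Ono and Ahlgren-Boylan.

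For step (i), by Landau's asymptotic $\pi_d(X):=\#\{M\leq X:\omega(M)=d\}\sim X(\log\log X)^{d-1}/((d-1)!\log X)$, together with the crude bound
\[
\#\{M\leq X:\omega(M)=d,\;2\mid M\}\ =\ \sum_{k\geq 1}\#\{m\leq X/2^k:\omega(m)=d-1,\;m\text{ odd}\}\ \ll\ \frac{X(\log\log X)^{d-2}}{\log X},
\]
the $M\in B_d$ with $2\mid M$ form a proportion $O(1/\log\log X)$ of $B_d$; the same holds for $3\mid M$. Thus $B_d':=\{M\in B_d:\gcd(M,6)=1\}$ has relative density $1$ in $B_d$. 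For $M\in B_d'$, Ahlgren's generalization of the bound \eqref{e : ono_bound} yields $M\in\mathcal{N}$ as soon as every residue class modulo $M$ is realized as $p(n)\bmod M$, so it suffices to establish this surjectivity for a density-$1$ subset of $B_d'$.

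Write $M=\prod_{i=1}^d\ell_i^{a_i}$ with primes $\ell_i\geq 5$; by CRT the surjectivity of $p(n)\bmod M$ is equivalent to surjectivity of the joint map $n\mapsto(p(n)\bmod\ell_i^{a_i})_{i=1}^d$ onto $\prod_i\ZZ/\ell_i^{a_i}\ZZ$. Ahlgren-Boylan's proof of Newman's conjecture for prime-power moduli attaches to each pair $(\ell,a)$ (via the Shimura lift of a Bruinier-Ono weight-$3/2$ form) a cuspidal eigenform whose Hecke eigensystem modulo $\ell^{a}$ is cut out by a Galois representation $\rho_{\ell,a}\colon\Gal(\overline{\Q}/\Q)\to\GL_2(\ZZ/\ell^{a}\ZZ)$, and Chebotarev applied to $\rho_{\ell,a}$ delivers surjectivity of $p(n)\bmod\ell^{a}$. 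I would upgrade this to joint surjectivity modulo $M$ by applying Chebotarev to the product representation $\rho_M:=\prod_i\rho_{\ell_i,a_i}$; this yields the required conclusion exactly when $\mathrm{Im}(\rho_M)$ equals the full product $\prod_i\mathrm{Im}(\rho_{\ell_i,a_i})$. By a Goursat/Ribet-style analysis, failure of this fullness forces nontrivial isomorphisms between the simple subquotients of the $\rho_{\ell_i,a_i}$, which in turn impose congruence restrictions on the primes $\ell_i$ determined by the ramification of the attached modular forms.

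The principal obstacle is controlling these exceptional $M$: one must show that the set of $M\in B_d'$ whose prime divisors force such congruences has relative density zero in $B_d$. This reduces to a counting estimate over tuples of primes satisfying fixed Galois-theoretic congruence conditions, carried out uniformly in the weights and levels of the forms $f_{\ell_i,a_i}$ (both of which grow with $\ell_i$). The expected outcome is an upper bound of order $\pi_d(X)/\log\log X$ for the exceptional set, matching the threshold used in step (i) and completing the density-$1$ statement.
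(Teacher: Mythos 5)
There is a genuine gap, and it sits exactly at the point you flag as the ``principal obstacle.'' Your plan requires the image of the product representation $\rho_M=\prod_i\rho_{\ell_i,a_i}$ to be the full product of the individual images, followed by a Goursat-type classification of the exceptional moduli and a count showing they have relative density zero in $B_d$; you then state that the needed bound is ``the expected outcome'' rather than proving it. This is the crux of the theorem and it is left open. Moreover, the difficulty is self-inflicted: the mechanism that produces all residues modulo $\ell^{a}$ is not Chebotarev applied to a representation whose image must be large, but the Hecke recursion of Bruinier--Ono. One needs a single prime $p$ with $f_{\ell}|T_{p^2}\equiv 2f_{\ell}\pmod{\ell^{e(\ell)}}$ for \emph{every} $\ell\mid M$ simultaneously, together with $p\equiv 1\pmod{N_M}$ and $\left(\frac{n_0}{p}\right)=-1$; then $a(n_0p^{2e})\equiv(2e+1)a(n_0)\pmod{M}$ realizes every residue class (on the correct progression, so that the quantitative bounds apply). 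The paper's Lemma \ref{lem : main lem 1} finds such $p$ by taking the Frobenius class that acts trivially on the field cut out by \emph{all} the mod-$\ell^{e(\ell)+u(\ell)}$ representations attached to the eigenform constituents of the Shimura lifts and on $\Q(\xi_{N_M})$, and nontrivially on $\Q(\sqrt{n_0})$; the only linear-disjointness required is $\Q(\sqrt{n_0})\cap E_1E_2=\Q$, which follows from a ramification argument using $n_0\nmid NM$. No fullness of $\mathrm{Im}(\rho_M)$, and hence no exceptional set of moduli coming from congruences between residual representations, ever enters.

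Two further points. First, your reduction via Ahlgren's theorem is imprecise: the hypothesis of Ono's and Ahlgren's bounds is that each residue $r$ is attained by $p\left(\frac{Mn_r+1}{24}\right)$, i.e.\ on the progression $24n\equiv 1\pmod{M}$, not by $p(n)$ at an arbitrary argument, and membership in $\mathcal{N}$ demands the quantitative bounds rather than bare surjectivity; the paper bypasses this by proving the bounds directly through Lemma \ref{lem : lem of distribution-half}. Second, the place where a density loss genuinely occurs --- and which your proposal does not address --- is the passage from the weakly holomorphic form $1/\eta(24z)$ of weight $-\frac12$ to cusp forms modulo $\ell^{e(\ell)}$ via Treneer's construction, which imposes quadratic residue conditions $\left(\frac{r_1n_0}{\ell}\right)=-1$ on the prime divisors $\ell$ of $M$. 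For a fixed admissible $n_0$ this only captures a density-$2^{-d}$ subset of $B_d$ (Lemma \ref{lem : density of k-primes}); density $1$ is recovered by letting $n_0$ range over infinitely many square-free classes (available because $1/\eta(24z)$ is not a linear combination of theta series, and because $s(1/\eta(24z))=1$, verified by computing the principal parts at all cusps) and applying inclusion--exclusion. Your proposal contains no substitute for this limiting argument, which is where the statement ``density equal to $1$'' rather than ``positive density'' is actually earned.
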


For any complex number $z$ with $\Im(z)>0$, let $q:=e^{2\pi i z}$ and $\eta(z)$ be Dedekind's eta function defined by
\[ \eta(z):=q^{\frac{1}{24}}\prod_{n=1}^{\infty}(1-q^n).\]
The generating function of $p(n)$ is given by
\[ \sum_{n=0}^{\infty} p(n)q^n=\prod_{n=1}^{\infty}\frac{1}{1-q^n}. \]
Then, we have
\[ \frac{1}{\eta(24z)}=\sum_{n=0}^{\infty} p(n)q^{24n-1}.  \]
Note that $\frac{1}{\eta(24z)}$ is a weakly holomorphic modular form of weight $-\frac{1}{2}$ on $\Gamma_0(576)$ with nebentypus $\left(\frac{12}{\cdot}\right)$. Our proof of Theorem \ref{thm : thm 2} is given by considering the distribution of the Fourier coefficients of a weakly holomorphic modular form with half-integral weight modulo a positive integer.

For an integer $k$, let $f$ be a weakly holomorphic modular form of weight $\frac{k}{2}$ on $\Gamma_0(N)$ with nebentypus $\chi$.
For each $\rho:=\left(\begin{smallmatrix} a & b\\
c & d\end{smallmatrix}\right)\in \SL_2(\ZZ)$,
let
\[ (cz+d)^{-\frac{k}{2}}f\left(\frac{az+b}{cz+d}\right)
:=\sum_{n\gg -\infty} a_{f}\left(\rho:\frac{n}{w_{\rho}}\right)q^{\frac{n}{w_{\rho}}} \]
for some positive integer $w_{\rho}$.
Let $\Omega(f)$ be the subset of $\mathbb{Q}^{\times}$ consisting of negative $\frac{n}{w_{\rho}}$ such that $a_{f}\left(\rho:\frac{n}{w_{\rho}}\right)\neq 0$ for some $\rho\in \SL_2(\ZZ)$, and $\overline{\Omega(f)}$ be the image of $\Omega(f)$ in $\Q^{\times}/\Q^{\times^2}$.
For example, if $f=\frac{1}{\eta(24z)}$, then $$\overline{\Omega(f)}=\left\{-\Q^{\times^2}\right\}$$
(for details, see \eqref{eq 4} in Section 4).
Let $\alpha:=\frac{(k,2)}{2}$ and $\mathcal{N}(f)$ be the set of positive integers $M$ such that for each integer $r$,
\[    \#\{ n\leq X : a_{f}(n)\equiv r \pmod{M} \} \gg_{M} \begin{cases}
X \quad &\text{if }r=0,\\
\frac{X^{\alpha}}{\log X} \quad &\text{if }r\neq 0.
\end{cases} \]
Theorem \ref{thm : thm 2} is implied by the following theorem on the distribution of the Fourier coefficients of a weakly holomorphic modular form modulo a positive integer.
\begin{thm}\label{thm : thm of weak}
Let $k$ be an integer, and $N$ be a positive integer such that $4|N$ if $k$ is odd. 
Let $f$ be a weakly holomorphic modular form of weight $\frac{k}{2}$ on $\Gamma_0(N)$ with nebentypus $\chi$.
Let $d$ be a positive integer and $B_{d}$ be as in Theorem \ref{thm : thm 2}.
Assume that $f$ has integral coefficients and is not a linear combination of single-variable theta series. If we let
\[ s(f):=\# \overline{\Omega(f)},\]
then
\begin{equation}\label{eq 3}
 \liminf_{X\to \infty} \frac{\#\{M\leq X :  M\in B_{d} \cap \mathcal{N}(f) \}}{\#\{ M\leq X : M\in B_{d} \}}\geq \left(\frac{1}{2}\right)^{d(s(f)-1)}.
\end{equation}
\end{thm}

\begin{rmk}
Assume that $f=\frac{1}{\eta(24z)}$.
Then, we have 
\[s(f)=\#\overline{\Omega(f)} =\# \left\{-\Q^{\times^2}\right\} = 1.\] 
This implies the right side of \eqref{eq 3} is equal to $1$.
Thus, Theorem \ref{thm : thm 2} follows from Theorem \ref{thm : thm of weak}.
\end{rmk}

When $f$ is a modular form on $\Gamma_0(N)$ with nebentypus $\chi$, Theorem \ref{thm : thm of weak} is improved as follows.

\begin{thm}\label{thm : thm 1}
Let $k$ and $N$ be positive integers with $4\mid N$. 
Let $f(z):=\sum_{n=0}^{\infty} a(n)q^n$ be a modular form of weight $k+\frac{1}{2}$ on $\Gamma_0(N)$ with nebentypus $\chi$.
Assume that $a(n)$ is integral for all $n$ and that $f$ is not a linear combination of single-variable theta series. Let $n_0$ be a square-free integer such that $n_0\nmid N$ and that $a(n_0m^2)\neq 0$ for some positive integer $m$.
If $M$ is relatively prime to $Nn_0ma(n_0m^2)$,
then for each integer $r$,
\begin{equation}\label{e : eq of well-distributed}
    \#\{ n\leq X : a(n)\equiv r \pmod{M} \} \gg_{M} \begin{cases}
X \quad &\text{if }r=0,\\
\frac{\sqrt{X}}{\log X} \quad &\text{if }r\neq 0.
\end{cases}
\end{equation}
\end{thm}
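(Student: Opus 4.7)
The plan is to combine the Shimura correspondence, the theory of mod-$\ell$ Galois representations attached to the resulting integer-weight lift, and the Chebotarev density theorem. Throughout I work one prime $\ell\mid M$ at a time --- by hypothesis every such $\ell$ is coprime to $Nn_0ma(n_0m^2)$ --- and recombine over the primes dividing $M$ using the Chinese remainder theorem together with Chebotarev independence of the mod-$\ell$ Galois representations for distinct $\ell$. The cases $r=0$ and $r\neq 0$ require quite different inputs.

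For the case $r=0$ I would invoke the half-integer weight version of the Ono--Skinner mod-$\ell$ nilpotency theorem (developed for this setting by Bruinier--Ono and Ahlgren): since $f$ is not a linear combination of single-variable theta series, for every $\ell\nmid Nn_0ma(n_0m^2)$ there is a set of primes $p$ of positive density (in fact density one) with $f\mid T_{p^2}\equiv 0\pmod\ell$. Reading the resulting identity at indices of the form $n=p\cdot m'$ with $p\nmid m'$ forces $a(p^3m')\equiv 0\pmod\ell$. Fixing a sufficiently small such $p$ already yields $\gg X/p^3\gg X$ values of $n\le X$ with $a(n)\equiv 0\pmod\ell$, and by Chebotarev independence a single prime $p$ can be chosen to work simultaneously for every $\ell\mid M$, producing the bound $\gg_M X$.

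For the case $r\neq 0$, the plan is to use the $n_0$-th Shimura lift
\[
\mathrm{Sh}_{n_0}(f)=\sum_{n\ge 1}A(n)q^n,\qquad A(n)=\sum_{d\mid n}\chi(d)\Big(\tfrac{(-1)^k n_0}{d}\Big)d^{k-1}a\!\left(n_0(n/d)^2\right),
\]
which is a weight $2k$ modular form on $\Gamma_0(N/2)$ and is nonzero because $a(n_0m^2)\neq 0$. Reducing modulo $\ell$ and localizing at a maximal ideal $\mfr{m}$ of the mod-$\ell$ Hecke algebra that detects $a(n_0m^2)\pmod\ell$ produces a mod-$\ell$ eigenform whose attached semisimple Galois representation
\[
\bar\rho_\ell:\Gal(\overline{\Q}/\Q)\longrightarrow\GL_2(\overline{\FF}_\ell)
\]
has large image by a Ribet-style argument --- the non-theta hypothesis on $f$ is precisely what prevents the image from being contained in a Borel subgroup or in the normalizer of a split/non-split Cartan. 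Chebotarev then ensures that the trace of Frobenius hits every residue class in $\overline{\FF}_\ell$ on a positive-density set of primes.

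Translating back via the half-integer-weight Hecke recursion
\[
a(n_0m^2p^2)=a(n_0m^2)\left(\lambda_{p^2}-\chi(p)\Big(\tfrac{(-1)^kn_0}{p}\Big)p^{k-1}\right)\qquad(p\nmid Nn_0m),
\]
with $\lambda_{p^2}=\tr\bar\rho_\ell(\Frob_p)$ on the chosen eigencomponent, and using that $a(n_0m^2)$ is a unit modulo $M$, every residue $r\pmod M$ is attained by $a(n_0m^2p^2)$ for a positive-density set of primes $p$. Restricting to $p\le\sqrt{X/(n_0m^2)}$ forces $n_0m^2p^2\le X$, and the prime number theorem combined with Chebotarev yields $\gg_M\sqrt{X}/\log X$ admissible $n$. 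The principal obstacle is obtaining uniform big-image for $\bar\rho_\ell$ across all primes $\ell\nmid Nn_0ma(n_0m^2)$: the non-theta hypothesis on $f$ must be upgraded to a uniform statement about the mod-$\ell$ Galois representations attached to the Hecke-eigencomponents of $\mathrm{Sh}_{n_0}(f)$, and this is where the bulk of the technical work lies. A secondary subtlety is that $f$ is only assumed to be a modular form, not a Hecke eigenform, so the Hecke recursion must be applied on a carefully chosen mod-$\ell$ eigencomponent that continues to witness $a(n_0m^2)\not\equiv 0\pmod\ell$ --- this is precisely the role of the localization at $\mfr{m}$.
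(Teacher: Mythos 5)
Your plan for $r=0$ is essentially the paper's: fix one prime $q$ (for all $\ell\mid M$ simultaneously) with Hecke eigenvalue $\equiv 0$, coming from the fact that complex conjugation has trace zero in the attached odd Galois representations, and read off $a(q^3m')\equiv 0\pmod M$ to get $\gg_M X$ indices. (Your parenthetical ``in fact density one'' is an overclaim, but only positive density is needed.) The route you propose for $r\neq 0$, however, has a genuine gap. You want the trace of Frobenius on a mod-$\ell$ eigencomponent of $\mathrm{Sh}_{n_0}(f)$ to hit \emph{every} residue class, and you derive this from a ``big image'' statement that you claim follows from $f$ not being a linear combination of theta series. It does not: the eigencomponent of the Shimura lift that witnesses $a(n_0m^2)\not\equiv 0\pmod\ell$ can perfectly well be congruent to an Eisenstein series or have dihedral/exceptional image for particular $\ell$, in which case the set of traces of Frobenius is a proper subset of $\FF_\ell$ and your argument produces only some residues $r$. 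The non-theta hypothesis in the theorem is used for something much weaker (the existence of a suitable square-free $n_0\nmid N$ with $a(n_0m^2)\neq 0$), not to control Galois images. The paper needs no image information beyond two universally available elements: the identity (so that for a positive proportion of $p$ one has $\lambda_G(p)\equiv 2$ for \emph{all} eigencomponents $G$ and all $\ell\mid M$ at once, hence $f_\ell|T_{p^2}\equiv 2f_\ell\pmod{\ell^{e(\ell)}}$) and complex conjugation. All residues are then produced not by varying the trace but by iterating the Hecke recursion at a single such $p$ chosen with the extra splitting condition $\left(\frac{n_0}{p}\right)=-1$, which yields $a(n_0p^{2e})\equiv(2e+1)a(n_0)\pmod M$; since $M$ is odd and $a(n_0)$ is a unit mod $M$, the odd multiples $(2e+1)a(n_0)$ already exhaust $\ZZ/M\ZZ$. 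Arranging $\left(\frac{n_0}{p}\right)=-1$ simultaneously with $\mathrm{Frob}_p=\mathrm{Id}$ on the representation field is exactly where the hypothesis $n_0\nmid N$ (hence $n_0\nmid NM$) enters, via linear disjointness of $\Q(\sqrt{n_0})$ from the fields cut out by the Galois representations; your proposal omits this condition entirely, and without it the recursion degenerates to $a(n_0p^{2e})\equiv a(n_0)$ and produces only one residue.

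Two further points would still need repair even if the image problem were solved. First, localizing at a maximal ideal $\mathfrak{m}$ of the mod-$\ell$ Hecke algebra replaces $f$ by a projection whose coefficients are no longer congruent to $a(n)$, so conclusions about the eigencomponent do not transfer back to the count $\#\{n\le X: a(n)\equiv r\pmod M\}$; the standard device (and the paper's) is instead to choose $p$ making \emph{every} eigenvalue $\equiv 2$, so that the congruence $f_\ell|T_{p^2}\equiv 2f_\ell$ holds for the full, non-eigen form. Second, $f$ is only assumed holomorphic, not cuspidal, and the congruence must be arranged modulo $\ell^{e(\ell)}$ rather than $\ell$; the paper first passes to the cusp forms $f_\ell:=(f\otimes\chi_\ell^{triv})F_\ell^{\ell^{e(\ell)-1}}\equiv\sum_{\ell\nmid n}a(n)q^n\pmod{\ell^{e(\ell)}}$ before invoking the Shimura correspondence and Deligne's representations, a step your proposal skips.
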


\begin{rmk}
Let us note the known results for the distribution of the Fourier coefficients of modular forms of integral or half-integral weight modulo an integer $M$. Let $f(z):=\sum_{n=1}^{\infty} a(n)q^n$ be a cusp form of integral or half-integral weight on $\Gamma_0(N)$ with nebentypus $\chi$ such that its Fourier coefficients are integral.
\begin{enumerate}
\item Assume that the weight of $f$ is integral.
Let $c$ be a positive integer.
For this case,  Serre \cite{SE} proved that if $M$ is an odd integer, then for each integer $r$,
\[\#\{ n\leq X : a(n)\equiv r \pmod{M} \} \gg_{M,c} \begin{cases}
X \quad &\text{if }r=0,\\
\frac{X(\log \log X)^{c}}{\log X} \quad &\text{if }r\neq 0.
\end{cases}\]
\item  Assume that the weight of $f$ is half-integral, and that $M$ is a power of an odd prime. Let $k+\frac{1}{2}$ be the weight of $f$, and $M:=\ell^{j}$, where $\ell$ is an odd prime and $j$ is a positive integer. For this case, Bruinier and Ono \cite{BO} proved that if there are not finitely many square-free positive integers $n_i$ such that
\begin{equation}\label{e : eq of pseudo theta form}
    f(z)\equiv \sum_{i=1}^{t} \sum_{m=1}^{\infty} a(n_im^2)q^{n_im^2} \pmod{\ell},
\end{equation}
then for each integer $r$, the bound \eqref{e : eq of well-distributed} holds for $M$.
Ahlgren and Boylan \cite{AB2} obtained the bounds for the weight of $f$ such that $f$ has the form \eqref{e : eq of pseudo theta form}. This implies that if $k>1$ and $\ell>2k+1$, then for each integer $r$, the bound \eqref{e : eq of well-distributed} holds for $M$.
\end{enumerate}
\end{rmk}

\subsection{Further applications of Theorems \ref{thm : thm of weak} and \ref{thm : thm 1}}
As applications of Theorems \ref{thm : thm of weak} and \ref{thm : thm 1}, we study generalized Frobenius partitions with $h$-colors and $t$-core partitions.

\subsubsection{Generalized Frobenius partitions with $h$-colors}
Let us give the definition of the generalized Frobenius partition of an integer $n$, which was given by Andrews \cite{AN}. A two-rowed array of non-negative integers
\[ \begin{pmatrix}
a_1 & \cdots & a_r \\
b_1 & \cdots & b_r
\end{pmatrix}\]
is called a generalized Frobenius partition of $n$ if $a_{i}\geq a_{i+1}$, $b_{i}\geq b_{i+1}$ and $r+\sum_{i=1}^{r} (a_i+b_i)=n$.
For a positive integer $h$, let $c\phi_{h}(n)$ denote the number of generalized Frobenius partitions of $n$ with $h$-colors (for details, see \cite{AN}).
The functions $c\phi_{h}(n)$ have been studied by many authors; for example \cite{AAD, BS, CGH, CWL,GS, GWX,L,Love,P, PR,WZ}.
In \cite[Theorem 5.2]{AN}, the generating function of $c\phi_{h}(n)$ is given by
\[  C\Phi_h(q):=\sum_{n=0}^{\infty} c\phi_h(n)q^n=\frac{1}{\prod_{n=1}^{\infty}(1-q^n)^h}\sum_{m\in \mathbb{Z}^{h-1}}q^{Q(m)}, \]
where $Q(m_1,\dots, m_{h-1}):=\sum_{i=1}^{h-1} m_i^2+\sum_{1\leq i<j\leq h-1} m_im_j$.
By \cite[Theorem 4.9.3]{MI}, we see that $q^{-h}C\Phi_{h}(q^{24})$ is a weakly holomorphic modular form of half-integral weight (for details, see Section \ref{s : section of weak}).
Thus, we have the following theorem.

\begin{thm}\label{thm : thm 3}
Let $d$ and $h$ be positive integers with $h>1$. Let $B_{d}$ be as in Theorem \ref{thm : thm 2}, and let $\mathcal{N}_{h}$ be the set of positive integers $M$ such that for each integer $r$,
\[ \#\{ n\leq X : c\phi_{h}(n)\equiv r \pmod{M} \} \gg_{M} \begin{cases}
X \quad &\text{if }r=0,\\
\frac{\sqrt{X}}{\log X} \quad &\text{if }r\neq 0.
\end{cases}\]
Then,
\[ \liminf_{X\to \infty} \frac{\#\{M\leq X :  M\in B_{d} \cap \mathcal{N}_{h} \}}{\#\{ M\leq X : M\in B_{d} \}}\geq \left(\frac{1}{2}\right)^{d(576h^2-1)}.\]
In particular, if $h=2$ or $h=3$, then
\[ \lim_{X\to \infty} \frac{\#\{M\leq X :  M\in B_{d} \cap \mathcal{N}_{h} \}}{\#\{ M\leq X : M\in B_{d} \}}=1. \]
\end{thm}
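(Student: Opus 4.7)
The plan is to realize $c\phi_h(n)$ as the Fourier coefficients of a weakly holomorphic modular form of half-integral weight, and then invoke the distribution theorems established earlier in the paper.

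First, I would rewrite the generating function in a form amenable to the machinery. Using $\eta(24z)=q\prod_{n\geq 1}(1-q^{24n})$ and setting $\Theta_Q(z):=\sum_{m\in \mathbb{Z}^{h-1}}q^{Q(m)}$, we get
\[ F_h(z):=q^{-h}C\Phi_h(q^{24})=\frac{\Theta_Q(24z)}{\eta(24z)^h}. \]
Since $\Theta_Q$ has weight $\frac{h-1}{2}$ and $\eta$ has weight $\frac{1}{2}$, the form $F_h$ has weight $-\frac{1}{2}$, and by \cite[Theorem 4.9.3]{MI} (cited in the paper) it is weakly holomorphic of half-integral weight on $\Gamma_0(N_h)$ with some nebentypus, with $N_h\mid 576h^2$. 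This is the modular input needed.

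For the lower bound $\bigl(\tfrac{1}{2}\bigr)^{d(576h^2-1)}$, I would apply the weakly holomorphic analogue of Theorem \ref{thm : thm 1} promised in Remark \ref{rmk : rmk 1} (that is, Theorem \ref{thm : thm of weak}) to $F_h$. That theorem asserts that a positive proportion of $M\in B_d$ satisfies the well-distribution bound \eqref{e : eq of well-distributed} for the coefficients of any weakly holomorphic modular form of half-integral weight. The explicit constant $(1/2)^{d(576h^2-1)}$ should come from quantifying that proportion in terms of the level: roughly, for each of the $d$ prime factors of $M$ one imposes local constraints modulo primes up to (a quantity bounded by) $576h^2$, each constraint cutting down the density by at most a factor of $1/2$, and taking the product over prime factors of $M$ gives the stated bound.

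For the density-$1$ refinement when $h=2$ or $h=3$, the plan is to mimic the strategy used for Theorem \ref{thm : thm 2} (the partition function case), namely upgrade from the weakly holomorphic version to the stronger Theorem \ref{thm : thm 1}. Concretely, I would multiply $F_h$ by a suitable power $\eta(24z)^j$ to obtain a genuine holomorphic modular form $G_h$ of half-integral weight, and then recover $c\phi_h(n)$ from the coefficients of $G_h$ by convolving against the (controlled) coefficients of $\eta(24z)^j$ given by Euler's pentagonal number theorem. For $h=2,3$ the quadratic form $Q$ is explicit ($Q(m)=m^2$ and the $A_2$-form, respectively), which should make it feasible to verify the two hypotheses of Theorem \ref{thm : thm 1}: that $G_h$ is not a linear combination of single-variable theta series, and that some coefficient $a(n_0m^2)$ with $n_0$ squarefree and $n_0\nmid N_h$ is nonzero. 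Once these hold, Theorem \ref{thm : thm 1} yields \eqref{e : eq of well-distributed} for every $M$ coprime to a fixed integer depending only on $h$, which gives density $1$ in $B_d$.

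The main obstacle is this last verification for $h=2,3$: one must exhibit explicitly a squarefree $n_0\nmid N_h$ and $m\geq 1$ with $a(n_0m^2)\neq 0$ for the auxiliary form $G_h$, and rule out the degenerate possibility that $G_h$ lies in the span of single-variable theta series. This is a nontrivial and genuinely case-by-case analysis, and the restriction to $h=2,3$ in the statement almost certainly reflects exactly where such an explicit check can be carried out; for larger $h$ one has to settle for the positive proportion coming from the weakly holomorphic theorem.
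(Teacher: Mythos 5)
Your reduction of the theorem to the weakly holomorphic machinery is the right first move, and for the general lower bound you correctly identify Theorem \ref{thm : thm of weak} as the tool. But your account of where the constant $(1/2)^{d(576h^2-1)}$ comes from is not the paper's mechanism, and this matters for the second half. In the paper the exponent is $d(s(f)-1)$ where $s(f)=\#\overline{\Omega(f)}$ counts the square classes in $\Q^{\times}/\Q^{\times 2}$ of the negative exponents occurring in the expansions of $f=q^{-h}C\Phi_h(q^{24})$ at \emph{all} cusps. The factors of $\tfrac12$ arise because Treneer's construction (Theorem \ref{thm : thm of Tr}) of a cusp form congruent to a piece of $f$ modulo $\ell^{e}$ requires the Legendre symbols $\left(\frac{r_i}{\ell}\right)$ to agree for a full set of representatives $r_1,\dots,r_{s(f)}$ of $\overline{\Omega(f)}$; that is $s(f)-1$ independent conditions on each of the $d$ prime divisors of $M$. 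The bound $s(f)\leq 576h^2$ is then obtained by showing every negative cusp exponent lies in $\{-j/(576h):1\leq j\leq 576h^2\}$. Your picture of ``local constraints modulo primes up to $576h^2$'' is not what happens.

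For $h=2,3$ your proposed route diverges from the paper and has a genuine gap. The paper does \emph{not} pass to a holomorphic form: it stays with Theorem \ref{thm : thm of weak} and proves, by an explicit evaluation of the Gauss sums $\Phi_{\rho}(0,\overline{w})$ in the theta transformation formula, that all negative cusp exponents of $q^{-h}C\Phi_h(q^{24})$ lie in a single square class, i.e.\ $s(f)=1$; the exponent $d(s(f)-1)$ is then $0$ and the same theorem gives density $1$. Your plan to multiply by $\eta(24z)^{j}$ and apply Theorem \ref{thm : thm 1} to the resulting holomorphic form $G_h$ does not go through: the coefficients of $G_h$ are convolutions of $c\phi_h$ with the coefficients of $\eta(24z)^{j}$, and a well-distribution statement modulo $M$ for those convolved coefficients does not transfer back to $c\phi_h(n)$ --- there is no deconvolution step available modulo $M$. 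Worse, the natural choice $j=h$ gives $G_h=g_h$, which for $h=2$ is literally a single-variable theta series, i.e.\ exactly the degenerate case excluded by Theorem \ref{thm : thm 1}. The restriction to $h=2,3$ in the statement reflects where the explicit cusp-expansion computation forcing $s(f)=1$ can be carried out, not where a holomorphic surrogate exists.
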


\subsubsection{$t$-core partitions}\label{sss : t-core}
A partition $\lambda$ of $n$ is called a $t$-core partition of $n$ if the hook length of each cell in the Ferrers diagram of $\lambda$ is not divided by a fixed positive integer $t$.
Let $c_{t}(n)$ denote the number of $t$-core partitions of $n$ (for details, see \cite{CKNS}).
Note that the generating function of $c_t(n)$ is given by
\[ \sum_{n=0}^{\infty} c_t(n)q^{n}=\prod_{n=1}^{\infty}\frac{(1-q^{tn})^t}{1-q^n}.\]
Let
\begin{equation*}
     f_{t}(z):=\sum_{n=0}^{\infty} c_{t}(n)q^{24n+t^2-1} = \frac{\eta^{t}(24tz)}{\eta(24z)}.
\end{equation*}
Then, $f_t$ is a modular form of integral or half-integral weight. Thus, we have the following theorem.

\begin{thm}\label{cor : cor 1}
Let $t$ be a positive integer with $t>2$.
Then, the following statements are true.
\begin{enumerate}
    \item Assume that $t$ is even.
    If $M$ is relatively prime to $3t(t^2-1)(t^2+23)$, then for each integer $r$,
\[\#\{ n\leq X : c_{t}(n)\equiv r \pmod{M} \} \gg_{M} \begin{cases}
X \quad &\text{if }r=0,\\
\frac{\sqrt{X}}{\log X} \quad &\text{if }r\neq 0.
\end{cases}\]
    \item Assume that $t$ is odd. Let $c$ be a positive integer. If $M$ is relatively prime to $3t(t^2-1)$, then for each integer $r$,
 \[ \#\{ n\leq X : c_t(n)\equiv r \pmod{M} \} \gg_{M,c} \begin{cases}
    X \quad &\text{if }r=0,\\
    \frac{X(\log \log X)^{c}}{\log X} \quad &\text{if }r\neq 0.
    \end{cases}\]
\end{enumerate}
\end{thm}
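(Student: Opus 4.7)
The plan is to recognize $c_t(n)$ as the Fourier coefficients of the eta quotient
\[ f_t(z) = \frac{\eta(24tz)^t}{\eta(24z)} = \sum_{n\geq 0} c_t(n)\, q^{24n+t^2-1}, \]
which, by the standard Ligozat / Gordon--Hughes--Newman criteria for eta quotients, is a cusp form (since $t>2$) of weight $(t-1)/2$ on $\Gamma_0(576t)$ with an explicit nebentypus. The weight is half-integral when $t$ is even and integral when $t$ is odd, so the strategy is to apply Theorem~\ref{thm : thm 1} in case~(1) and Serre's theorem (quoted in the remark after Theorem~\ref{thm : thm 1}) in case~(2).

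\textbf{Case (1): $t$ even.} The first task is to produce a square-free base exponent satisfying the hypotheses of Theorem~\ref{thm : thm 1}. Because $c_t(0) = c_t(1) = 1$, the coefficients of $q^{t^2-1}$ and $q^{t^2+23}$ in $f_t$ both equal $1$. Write $t^2 - 1 = n_0 m^2$ and $t^2 + 23 = n_0' (m')^2$ with $n_0, n_0'$ square-free. Since $\gcd(t^2 - 1,\, t^2 + 23) \mid 24$, any prime dividing both $n_0$ and $n_0'$ must divide $24$, so for $t > 2$ at least one of $n_0, n_0'$ has a prime divisor $>3$ and therefore does not divide the level $N = 576t$; select that choice. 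Theorem~\ref{thm : thm 1} then requires $\gcd(M,\, N n_0 m \cdot a(n_0 m^2)) = 1$, and since $a(n_0 m^2) = 1$, $N = 2^6 \cdot 3^2 \cdot t$, and $n_0 \mid (t^2-1)(t^2+23)$, this is implied by $\gcd(M,\, 3t(t^2-1)(t^2+23)) = 1$, as required. It remains to check that $f_t$ is not a linear combination of unary theta series: for $t \geq 6$ the weight $(t-1)/2 \geq 5/2$ exceeds $3/2$ and this is automatic; for $t = 4$ the weight is exactly $3/2$, and the theta alternative fails because $c_4(n) > 0$ for every $n \geq 0$ (by the classical positivity result for $4$-cores), so the support $\{24n + 15 : n \geq 0\}$ of $f_4$ meets infinitely many square classes and cannot lie in the union of finitely many of them. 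Theorem~\ref{thm : thm 1} then yields part~(1).

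\textbf{Case (2): $t$ odd.} Here $f_t$ is a cusp form of integral weight $(t-1)/2 \geq 1$ with integral Fourier coefficients, and Serre's theorem (the remark after Theorem~\ref{thm : thm 1}) produces the $(\log\log X)^c$-saving bound for every odd $M$ coprime to the level. The odd part of the level $576t$ divides $9t$, and the non-lacunarity of $f_t$ modulo $M$ is witnessed by the nonzero coefficient $a(t^2-1) = 1$, so after incorporating the factor $t^2-1$ the required coprimality collapses to $\gcd(M,\, 3t(t^2-1)) = 1$.

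\textbf{Main obstacle.} The only substantive step is excluding the unary-theta alternative for the weight-$3/2$ form $f_4$ in the $t=4$ sub-case of~(1); the remainder of the argument is careful bookkeeping of level, nebentypus, and the value of the base coefficient so that the generic coprimality conditions in Theorem~\ref{thm : thm 1} and Serre's theorem reduce to exactly the factors $3t(t^2-1)(t^2+23)$ and $3t(t^2-1)$ appearing in the statement.
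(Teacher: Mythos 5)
Your overall strategy coincides with the paper's: identify $c_t(n)$ with the coefficients of $\eta^t(24tz)/\eta(24z)$, exploit the two coefficients $c_t(0)=c_t(1)=1$ sitting at the exponents $t^2-1$ and $t^2+23$, and feed this into Theorem \ref{thm : thm 1} for even $t$ and a Serre-type integral-weight statement for odd $t$. However, there are two genuine gaps. First, in case (1) the inference ``any common prime of $n_0$ and $n_0'$ divides $24$, so for $t>2$ at least one of $n_0,n_0'$ has a prime divisor $>3$'' is a non sequitur: it does not exclude the possibility $n_0=n_0'=3$, in which case neither has a prime divisor exceeding $3$ and both divide the level $576t$. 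That is precisely the case one must rule out, and it needs a separate argument: if $t^2-1=3m_0^2$ and $t^2+23=3m_1^2$, then $m_1^2-m_0^2=8$, which forces $(m_1^2,m_0^2)=(9,1)$ and hence $t=2$, contradicting $t>2$. This computation is the actual crux of the paper's proof of part (1); by contrast, the unary-theta exclusion you single out as the main obstacle is automatic once one knows $n_0\nmid 576t$, since a linear combination of single-variable theta series on $\Gamma_0(N)$ is supported on square classes represented by divisors of $N$.

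Second, your parenthetical claim that $f_t$ is a cusp form for $t>2$ is false: the order of $\eta^t(24tz)/\eta(24z)$ at a cusp $a/c$ is proportional to $(c,24t)^2-(c,24)^2$, which vanishes at $c=1$, so $f_t$ does not vanish at the cusp $0$ for any $t$. In case (1) this is harmless, because Theorem \ref{thm : thm 1} only requires a holomorphic modular form. In case (2), however, you apply Serre's theorem, a statement about cusp forms, directly to $f_t$, and this does not go through. The missing ingredient is the paper's construction $f_\ell:=\bigl(f_t\otimes\chi_\ell^{triv}\bigr)F_\ell^{\ell^{e(\ell)-1}}$, a genuine cusp form on $\Gamma_0(576t\ell^2)$ congruent to $\sum_{\ell\nmid n}a_t(n)q^n$ modulo $\ell^{e(\ell)}$, to which the Serre-type Lemma \ref{lem : lem of distribution_integral} is then applied; the base coefficient at $t^2-1$ survives the twist because $(M,t^2-1)=1$. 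Both gaps are fixable, but each requires an idea (the $m_1^2-m_0^2=8$ computation; the twist-and-multiply construction) that is absent from your write-up.
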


\subsection{Organization of this paper}

The remainder of this paper is organized as follows.
In Section \ref{s : pre}, we review the basic properties of modular forms such as the Hecke operator and Shimura Correspondence.
In Section \ref{s : distribution of the Fourier coefficients}, we study the distribution of the Fourier coefficients of modular forms of half-integral weight modulo an integer, and we prove Theorems \ref{thm : thm 1} and \ref{cor : cor 1}.
In Section \ref{s : section of weak}, we study the distribution of the Fourier coefficients of weakly holomorphic modular forms modulo an integer, and we prove Theorems \ref{thm : thm 2}, \ref{thm : thm of weak}, and \ref{thm : thm 3}.

\section{Preliminaries}\label{s : pre}
In this section, we review some notions and properties about modular forms.
Let $(a,b)$ be the greatest common divisor of two integers $a$ and $b$.
Let $k$ and $N$ be integers, and $\chi$ be a Dirichlet character modulo $N$.
Let $w$ be a rational number with $2w\in \mathbb{Z}$.
Assume that $4\mid N$ if $w$ is not an integer. Let $M_{w}(\Gamma_0(N),\chi)$ (resp. $S_{w}(\Gamma_0(N),\chi)$) be the space of modular forms (resp. cusp forms) of weight $w$ on $\Gamma_0(N)$ with nebentypus $\chi$.
Here, a weakly holomorphic modular form is a holomorphic function on the complex upper half-plane that satisfies the automorphy conditions in the definition of a modular form but may have poles at the cusps.
For a weakly holomorphic modular form $f$ of integral or half-integral weight, let $a_{f}(n)$ be the $n$-th Fourier coefficient of $f$.
Let $\left(\frac{\cdot}{\cdot}\right)$ denote the Kronecker symbol.
Let $\mathbb{H}$ be the upper half-plane of $\mathbb{C}$.
For each $\rho:=\left(\begin{smallmatrix}
a & b\\
c & d
\end{smallmatrix}\right)\in \SL_2(\ZZ)$ and each meromorphic function $f : \mathbb{H}\to \mathbb{C}$, the slash operator $f|_{w}\rho$ is defined by
\[\left(f|_{w}\rho\right)(z):=(cz+d)^{-w}f\left(\frac{az+b}{cz+d}\right). \]
Note that $(f|_{w}\rho_1)|_{w}\rho_2$ may not be equal to $f|_{w}\rho_1\rho_2$ for $\rho_1, \rho_2\in \SL_2(\ZZ)$.
For $f\in M_{w}(\Gamma_0(N),\chi)$ and a Dirichlet character $\phi$ modulo $M$, let
\[ f\otimes \phi(z) := \sum_{n=0}^{\infty} a_{f}(n)\phi(n)q^n \in M_{w}(\Gamma_0(NM^2),\chi \phi^2). \]
For convenience, if $n$ is not an integer, we let $a_{f}(n):=0$.
For each prime $p$, the Hecke operators $T_{p,k,\chi}$ on $M_{k}(\Gamma_0(N),\chi)$ and  $T_{p^2,k+\frac{1}{2},\chi}$ on $M_{k+\frac{1}{2}}(\Gamma_0(N),\chi)$ are defined by
\[ \left(\sum_{n=0}^{\infty} a_{f}(n)q^n\right)|T_{p,k,\chi}:=\sum_{n=0}^{\infty}\left(a_{f}(pn)+\chi(p)p^{k-1}a_{f}(n/p) \right) q^{n}\]
and
\[ \left(\sum_{n=0}^{\infty} a_{f}(n)q^n\right)|T_{p^2,k+\frac{1}{2},\chi}:=\sum_{n=0}^{\infty}\left(a_{f}(p^2n)+\chi(p)\left(\frac{(-1)^{k}n}{p}\right)p^{k-1}a_{f}(n)+ \chi(p^2)p^{2k-1}a_{f}(n/p^2) \right) q^{n}. \]
For $f\in M_{k}(\Gamma_0(N),\chi)$, if $f$ is an eigenform of $T_{p,k,\chi}$, we let $\lambda_f(p)$ denote the Hecke eigenvalue of $f$ at $p$.
Similarly, for $f\in M_{k+\frac{1}{2}}(\Gamma_0(N),\chi)$, if $f$ is an eigenform of $T_{p^2,k+\frac{1}{2},\chi}$, let $\lambda_{f}(p^2)$ denote the Hecke eigenvalue of $f$ at $p^2$.

Assume that $f$ is a cusp form of weight $k+\frac{1}{2}$ on $\Gamma_0(N)$ with nebentypus $\chi$.
For each positive square-free integer $t$, we define $A_{t}(n)$ by
\[ \sum_{n=1}^{\infty} \frac{A_t(n)}{n^s}:=\sum_{n=1}^{\infty} \frac{\chi(n)\left(\frac{(-1)^{k}t}{n}\right)}{n^{s-k+1}}\sum_{n=1}^{\infty}\frac{a_{f}(tn^2)}{n^s}.\]
Then, the Shimura Correspondence $\Sh_{t}$ is defined by
\[ \Sh_{t}(f)(z):= \sum_{n=1}^{\infty} A_{t}(n)q^{n}\in M_{2k}(\Gamma_0(N/2),\chi^2).\]
If $k>1$, then $\Sh_{t}(f)$ is a cusp form.
Note that if $p$ is a prime with $(p,N)=1$, then
\[ \Sh_{t}(f)|T_{p,2k,\chi^2}=\Sh_{t}\left(f|T_{p^2,k+\frac{1}{2},\chi}\right).\]
For details on the Shimura Correspondence, we refer to \cite{SH}.

Let $f\in S_{k+\frac{1}{2}}(\Gamma_0(N),\chi)$ have integral Fourier coefficients.
Let $M$ be an odd integer.
Assume that there is a positive integer $n_0$ such that $a_{f}(n_0)$ is relatively prime to $M$.
In the proofs of Theorem 1.1 and Lemma 2.1 in \cite{BO}, Bruinier and Ono obtained that
if there is a prime $p$ satisfying
\begin{equation*}
    p\equiv 1\pmod{NM}, \quad \left(\frac{n_0}{p}\right)=-1, \text{ and } f|T_{p^2,k+\frac{1}{2},\chi}\equiv 2f \pmod{M},
\end{equation*}
then for each integer $r$,
\begin{equation*}
    \#\{ n\leq X : a_{f}(n)\equiv r \pmod{M} \} \gg_{M} \begin{cases}
X \quad &\text{if }r=0,\\
\frac{\sqrt{X}}{\log X} \quad &\text{if }r\neq 0.
\end{cases}
\end{equation*}
This result, coupled with the Chinese Remainder Theorem, gives the following lemma.

\begin{lem}\label{lem : lem of distribution-half}
Let $M$ be an odd positive integer. Let $\ell$ be a prime with $\ell \mid M$, and $v_{\ell}$ be the maximum of positive integers $e$ such that $\ell^e \mid M$.
Let $a(n)$ be a sequence of integers with the following properties: 
\begin{enumerate}
    \item There exists a positive integer $n_0$ such that $(n_0a(n_0),M)=1$.
    \item For every $\ell\mid M$, there exists $f_{\ell} \in S_{k_{\ell}+\frac{1}{2}}(\Gamma_0(N\ell^2),\chi_{\ell})\cap \mathbb{Z}[[q]]$ with integer $k_{\ell}$ such that for all $n$,
\[ a(n)\equiv a_{f_{\ell}}(n) \pmod{\ell^{v_{\ell}}}.\]
\end{enumerate}

If there is a prime $p$ such that for every $\ell$,
\begin{equation}\label{e : cond of p-lemma 2.1}
    p\equiv 1 \pmod{2N\ell^{v_{\ell}+2} }, \quad \left(\frac{n_0}{p}\right)=-1, \text{ and } f_{\ell}|T_{p^2,k_{\ell}+\frac{1}{2},\chi_{\ell}}\equiv 2f_{\ell} \pmod{\ell^{v_{\ell}}},
\end{equation}
then for each integer $r$,
    \[ \#\{ n\leq X : a(n)\equiv r \pmod{M} \text{ and } (n,M)=1\} \gg_{M} \begin{cases}
X \quad &\text{if }r=0,\\
\frac{\sqrt{X}}{\log X} \quad &\text{if }r\neq 0.
\end{cases}\]
\end{lem}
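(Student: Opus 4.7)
The plan is to reduce to the Bruinier-Ono distribution result \eqref{e : eq of well-distri-3} prime-by-prime and then glue the resulting congruences into a single bound modulo $M$ via Chebotarev applied to a product of Galois representations. First I would verify that the hypothesis \eqref{e : cond of p-lemma 2.1} on $p$, specialized to the individual form $f_\ell \in S_{k_\ell+\frac{1}{2}}(\Gamma_0(N\ell^2), \chi_\ell)$ and modulus $\ell^{e(\ell)}$, is exactly the Bruinier-Ono hypothesis \eqref{e : eq of sec 3-1}: the congruence $p \equiv 1 \pmod{2 N \ell^{e(\ell)+2}}$ is stronger than the required $p \equiv 1 \pmod{(N \ell^2) \cdot \ell^{e(\ell)}}$, while $(n_0/p) = -1$ and $f_\ell | T_{p^2} \equiv 2 f_\ell \pmod{\ell^{e(\ell)}}$ match verbatim. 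The cited Bruinier-Ono result therefore yields \eqref{e : eq of well-distri-3} for $a_{f_\ell}(n)$ modulo $\ell^{e(\ell)}$, and since $a(n) \equiv a_{f_\ell}(n) \pmod{\ell^{e(\ell)}}$ the bound transfers to $a(n)$ at every prime-power modulus $\ell^{e(\ell)} \parallel M$.

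The second step lifts these per-prime-power bounds to a joint bound modulo $M$. Bruinier-Ono construct the integers $n$ realizing prescribed residues in the form $n = n_0 q^2$, where $q$ is a prime whose Frobenius realizes a prescribed conjugacy class in the image of the mod-$\ell^{e(\ell)}$ Galois representation $\rho_\ell$ attached to the integer-weight Shimura lift $\Sh_{n_0}(f_\ell)$. Running Chebotarev's density theorem on the direct-sum representation
\[ \rho \; := \; \bigoplus_{\ell \mid M} \rho_\ell : G_\QQ \longrightarrow \prod_{\ell \mid M} \GL_2(\ZZ/\ell^{e(\ell)}\ZZ) \]
instead yields a positive density of primes $q$ realizing any prescribed class in every factor simultaneously. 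Via the defining identity for the Shimura correspondence this translates into simultaneous congruences $a_{f_\ell}(n_0 q^2) \equiv r_\ell \pmod{\ell^{e(\ell)}}$ for arbitrary $(r_\ell)_\ell$, and CRT converts $(r_\ell)_\ell$ into an arbitrary target $r \pmod{M}$. Restricting to $q \nmid M$, a cofinite condition on $\Frob_q$, forces $(n_0 q^2, M) = 1$. Counting such primes $q \leq \sqrt{X/n_0}$ by the prime number theorem delivers the $\sqrt{X}/\log X$ bound for $r \neq 0$, and the $r = 0$ case follows from the parallel, easier argument on full arithmetic progressions used in \cite{BO}.

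The main obstacle is the Goursat-type verification that the image of $\rho$ really surjects onto the product of the images of the $\rho_\ell$, so that joint Frobenius classes cover every residue $r \pmod M$. This is made tractable by the pairwise coprimality of the residue characteristics $\ell$, and by the fact that the shared prime $p$ from \eqref{e : cond of p-lemma 2.1} synchronizes the Hecke action across all $\ell$ by forcing the trace at $\Frob_p$ to equal $2$ modulo every $\ell^{e(\ell)}$ at once. Once joint largeness is in hand the rest of the argument is a routine prime-power-wise adaptation of the Bruinier-Ono proof.
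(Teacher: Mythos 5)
Your first step (checking that the hypothesis on $p$ implies the Bruinier--Ono hypothesis for each $f_\ell$ separately) is fine but does not by itself help: knowing that $\{n : a(n)\equiv r_\ell \pmod{\ell^{e(\ell)}}\}$ is large for each $\ell$ says nothing about the intersection of these sets over $\ell\mid M$, and you correctly identify that the whole content of the lemma is the gluing. The problem is that your proposed gluing mechanism has a genuine gap. You want to run Chebotarev on $\bigoplus_{\ell\mid M}\rho_\ell$ to produce primes $q$ with $a_{f_\ell}(n_0q^2)\equiv r_\ell\pmod{\ell^{e(\ell)}}$ for an \emph{arbitrary} prescribed tuple $(r_\ell)_\ell$. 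Since $a_{f_\ell}(n_0q^2)$ is controlled by $\operatorname{tr}\rho_\ell(\Frob_q)$ via the Hecke relation, this requires the image of each $\rho_\ell$ to contain elements of every trace modulo $\ell^{e(\ell)}$ (and then a Goursat argument for joint realization). Nothing in the hypotheses gives this largeness: the only information supplied about Hecke eigenvalues is the existence of a \emph{single} prime $p$ with eigenvalue $\equiv 2$, and in general the mod-$\ell^{e(\ell)}$ image attached to the Shimura lift of $f_\ell$ can be small (abelian or dihedral), in which case not every trace value is attained. So the step ``this translates into simultaneous congruences for arbitrary $(r_\ell)_\ell$'' would fail, and the Goursat verification you flag as the ``main obstacle'' is not even the first obstacle.

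The paper avoids this entirely. The key point is that the \emph{same} prime $p$ satisfies $f_\ell|T_{p^2}\equiv 2f_\ell\pmod{\ell^{e(\ell)}}$ for every $\ell$, so the Hecke recursion (using $\left(\frac{n_0}{p}\right)=-1$ at the first step and $p\equiv 1\pmod{\ell^{e(\ell)}}$ to kill the powers of $p$) yields $a_{f_\ell}(n_0p^{2e})\equiv(2e+1)a_{f_\ell}(n_0)\pmod{\ell^{e(\ell)}}$ for all $\ell$ simultaneously; the Chinese remainder theorem then gives $a(n_0p^{2e})\equiv(2e+1)a(n_0)\pmod{M}$. Since $M$ is odd and $a(n_0)$ is a unit mod $M$, letting $e$ vary sweeps $(2e+1)a(n_0)$ through every residue class mod $M$, producing $m_r=n_0p^{2e_r}$ with $a(m_r)\equiv r$ and $(m_r,M)=1$ with no largeness of Galois image needed. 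The quantitative count then comes from a positive proportion of primes $q\equiv-1\pmod{2MN\ell^2m_r}$ with $f_\ell|T_{q^2}\equiv 0\pmod{M}$ for all $\ell$ (obtained from Serre's result via the Shimura correspondence, not from a large-image Chebotarev argument); multiplying $m_r$ by $q^2$ preserves the residue $r$, and counting such $q\leq\sqrt{X/m_r}$ gives $\sqrt{X}/\log X$. If you replace your arbitrary-trace Chebotarev step with the $(2e+1)$-recursion from the fixed prime $p$, the rest of your outline essentially matches the paper.
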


\begin{rmk}
Let $\pi(X)$ be the number of primes less than or equal to $X$. For a subset $A$ of the set of primes, let  $\pi_A(X):=\#\{  p \leq X : \text{ primes } p \in A\}$. In this paper, ``a positive proportion of primes $p \in A$'' means that
\[
\liminf_{X\rightarrow \infty} \frac{\pi_A(X)}{\pi(X)}>0.
\]
\end{rmk}

\begin{proof}
The proof of this lemma follows those of Theorem 1.1 and Lemma 2.1 in \cite{BO}. For convenience, let $a_{f_{\ell}}(n):=0$ if $n$ is not an integer. Assume that $p$ satisfies \eqref{e : cond of p-lemma 2.1} for every $\ell$. Since $f_{\ell}|T_{p^2,k_{\ell}+\frac{1}{2},\chi_{\ell}}\equiv 2f_{\ell} \pmod{\ell^{v_{\ell}}} $, it follows that for each non-negative integer $e$,
\begin{multline*}
    a_{f_{\ell}}(n_0p^{e+2})+\chi_{\ell}(p)\left(\frac{(-1)^{k_{\ell}}n_0p^{e}}{p}\right)p^{k_{\ell}-1}a_{f_{\ell}}(n_0p^{e})+\chi_{\ell}(p^2)p^{2k_{\ell}-1}a_{f_{\ell}}(n_0p^{e-2})\\
    \equiv 2a_{f_{\ell}}(n_0p^{e})\pmod{\ell^{v_{\ell}}}.
\end{multline*}
Note that $p \equiv 1 \pmod{4}$ since $4 | N$. Thus, we have
\[ a_{f_{\ell}}\left(n_0 p^{e+2} \right)\equiv \begin{cases}
3a_{f_{\ell}}(n_0) &\pmod{\ell^{v_{\ell}}} \quad \text{if } e=0,\\
2 a_{f_{\ell}}\left(n_0p^e\right)-a_{f_{\ell}}\left(n_0p^{e-2}\right) &\pmod{\ell^{v_{\ell}}} \quad \text{if } e>0,
\end{cases}\]
and so
\[ a_{f_{\ell}}(n_0 p^{2e})\equiv (2e+1)a_{f_{\ell}}(n_0)\pmod{\ell^{v_{\ell}}}.\]
By the Chinese Remainder Theorem, we obtain
\begin{equation*}
    a(n_0p^{2e})\equiv (2e+1)a(n_0) \pmod{M}.
\end{equation*}
Note that $2a(n_0)$ is relatively prime to $M$. Thus, for each integer $r$, there is a positive integer $m_r$  such that $m_r$ is relatively prime to $M$ and
\[ a(m_r)\equiv r\pmod{M}.\]

Let $q$ be a prime such that for every $\ell\mid M$,
\begin{equation}\label{e : eq2}
    q\equiv -1 \pmod{2M N\ell^2m_r} \text{ and } f_{\ell}|T_{q^2,k_{\ell}+\frac{1}{2},\chi_{\ell}}\equiv 0 \pmod{M}.
\end{equation}
By the Shimura Correspondence and \cite[\S6.4]{SE}, we have that the proportion of $q$ satisfying \eqref{e : eq2} in the set of primes is positive.
For a fixed $q$, if $n$ is a positive integer with $(n,q)=1$, then
\begin{equation*}
    \begin{aligned}
    a_{f_{\ell}}\left(nq^3\right)&\equiv -\chi_{\ell}(q)\left(\frac{(-1)^{k_{\ell}}nq}{q}\right)q^{k_{\ell}-1}a_{f_{\ell}}(nq)-\chi_{\ell}(q^2)q^{2k_{\ell}-1}a_{f_{\ell}}\left(\frac{n}{q}\right)\\
    &\equiv 0 \pmod{\ell^{v_{\ell}}}.
    \end{aligned}
\end{equation*}
Thus, we obtain the proof of Lemma \ref{lem : lem of distribution-half} for the case when $r\equiv 0 \pmod{M}$.

Assume that $r \not \equiv 0\pmod{M}$.
Then, we have
\[ a_{f_{\ell}}(q^2m_r)+\chi_{\ell}(q)\left(\frac{(-1)^{k_{\ell}}m_r}{q}\right)q^{k_{\ell}-1}a_{f_{\ell}}(m_r)\equiv 0 \pmod{\ell^{v_{\ell}}}. \]
This implies that
\[ a_{f_{\ell}}(q^2m_r)\equiv \chi_{\ell}(-1)\left(\frac{(-1)^{k_{\ell}}}{-1}\right)(-1)^{k_{\ell}}a_{f_{\ell}}(m_r)\equiv r\pmod{\ell^{v_{\ell}}}. \]
By the Chinese Remainder Theorem, we obtain
\[ a(q^2m_r)\equiv  r\pmod{M}. \]
Since there is a positive proportion of primes $q$ satisfying \eqref{e : eq2} for every prime $\ell |M$,
the prime number theorem implies that
\[ \#\{n\leq X : a(n)\equiv r \pmod{M} \text{ and } (n,M)=1\}\gg_{M}\frac{\sqrt{X}}{\log X}. \]
\end{proof}

\section{Proofs of Theorems \ref{thm : thm 1} and \ref{cor : cor 1}}\label{s : distribution of the Fourier coefficients}
In this section, we study the distribution of the Fourier coefficients of modular forms of half-integral weight with integral coefficients modulo an integer, and we prove Theorems \ref{thm : thm 1} and \ref{cor : cor 1}.

Let $k$ and $N$ be integers with $4\mid N$.
Let $M$ be an odd integer. For each prime divisor $\ell$ of $M$, let $v_{\ell}$ be the maximum of positive integers $e$ such that $\ell^e \mid M$.
Let $f$ be a modular form in $M_{k+\frac{1}{2}}(\Gamma_0(N),\chi)$ with integral coefficients. To prove Theorem \ref{thm : thm 1}, for each prime divisor $\ell$ of $M$, we construct a cusp form $f_{\ell}(z)$ of half-integral weight on $\Gamma_0(N\ell^2)$ with nebentypus $\chi_{\ell}$ such that
\[ f_{\ell}(z)\equiv \sum_{\ell\nmid n} a_{f}(n)q^n \pmod{\ell^{v_{\ell}}}. \]
Let $n_0$ be a square-free positive integer such that $n_0 \nmid N$ and that $a_f(n_0m^2) \neq 0$ for some non-zero $m \in \mathbb{Z}$. Then, by Lemma \ref{lem : lem of distribution-half}, the proof of Theorem \ref{thm : thm 1} reduces to proving that for each odd integer $M$ relatively prime to $Nn_0ma_{f}(n_0m^2)$, there exists a prime $p$ satisfying \eqref{e : cond of p-lemma 2.1} in Lemma \ref{lem : lem of distribution-half}.
The following lemma gives the existence of such a prime $p$.

\begin{lem}\label{lem : main lem 1}
Let $M$ be an odd integer.
Assume that for each prime  $\ell \mid M$, $f_{\ell} \in S_{k_{\ell}+\frac{1}{2}}(\Gamma_0(N\ell^2),\chi_{\ell})$ is a non-zero function with integral coefficients and that $k_{\ell}\geq 2$.
If $n_0$ is a square-free integer such that
\[ n_0\nmid MN, \]
then there is a positive proportion of primes $p$ satisfying
\begin{equation*}
    p\equiv 1\pmod{2MN\ell^2}, \quad \left(\frac{n_0}{p}\right)=-1, \text{ and }f_{\ell}|T_{p^2,k_{\ell}+\frac{1}{2},\chi_{\ell}}\equiv 2f_{\ell} \pmod{\ell^{v_{\ell}}}
\end{equation*}
for every prime divisor $\ell$ of $M$.
\end{lem}
To prove Lemma \ref{lem : main lem 1}, we use the theory of $\ell$-adic Galois representations attached to newforms. Let $F$ be a newform in $S_{2k}(\Gamma_0(N),\chi)$ such that the first Fourier coefficient of $F$ is one. Let $K$ be a number field containing an $N$-th root of unity $\xi_{N}$ such that the Fourier coefficients of $F$ are in $K$.
Let $G_{\Q}$ be the absolute Galois group of $\Q$.
For a prime $\ell$, Deligne \cite{D} constructed an odd Galois representation $\rho_{F} : G_{\Q} \to \GL_2\left(K\otimes \mathbb{Q}_{\ell}\right)$ attached to $F$
such that $\rho_{F}$ is unramified at $p$ with $p\nmid N\ell$, and such that for $p$ with $p\nmid N\ell$,
\begin{equation}\label{e : eq of Galois repre, tr det}
    \tr(\rho_{F}(\Frob_p))=\lambda_{F}(p) \text{ and }  \det(\rho_{F}(\Frob_p))=\chi(p)p^{2k-1}.
\end{equation}
Here, $\Frob_p$ is a Frobenius element at $p$.
Let $\mathcal{O}_{K}$ be the ring of integers of $K$.
Since $G_{\Q}$ is compact, we take an $\mathcal{O}_{K}\otimes \mathbb{Z}_{\ell}$-lattice $\Lambda$ of rank $2$ such that $\rho_{F}(g)(\Lambda)=\Lambda$ for any $g\in G_{\Q}$.
From this, we deduce that there is a Galois representation $\rho_{F} : G_{\Q} \to \GL_2\left(\mathcal{O}_{K}\otimes \mathbb{Z}_{\ell}\right)$ satisfying \eqref{e : eq of Galois repre, tr det}.

Assume that $G\in S_{2k}(\Gamma_0(N),\chi)$ is a Hecke eigenform of $T_{p,2k,\chi}$ for all $p$ with $p\nmid N$ and that the Fourier coefficients of $G$ are in $\mathcal{O}_{K}$.
Then, we have
\[ G(z)=\sum_{\delta\mid N} c_{\delta} \cdot G_{\delta}(\delta z), \]
where $c_{\delta}\in \CC$ and $G_{\delta}$ is a newform in $S_{2k}(\Gamma_0(N_{\delta}),\chi)$ with $N_{\delta}\mid N$.
If $c_{\delta}\neq 0$, then $\lambda_{G}(p)=\lambda_{G_{\delta}}(p)$ for all $p$ with $p\nmid N$.
Thus, there is a Galois representation $\rho_{G}:G_{\Q}\to \GL_2\left(\mathcal{O}_{K}\otimes \mathbb{Z}_{\ell}\right)$ such that for a prime $p$ with $p\nmid N\ell$, $\rho_{G}$ is unramified at $p$ and
\[ \tr(\rho_{G}(\Frob_p))=\lambda_{G}(p) \text{ and }  \det(\rho_{G}(\Frob_p))=\chi(p)p^{2k-1}. \]
With these notations, we prove Lemma \ref{lem : main lem 1}.
\begin{proof}[Proof of Lemma \ref{lem : main lem 1}]
For a prime $\ell \mid M$, let $t_{\ell}$ be a positive square-free integer such that  $\Sh_{t_{\ell}}(f_{\ell})\neq 0$, and $A_{\ell}$ be a constant such that all the Fourier coefficients of $A_{\ell}\cdot \Sh_{t_{\ell}}(f_{\ell})$ are algebraic integers and 
\[ A_{\ell}\cdot \Sh_{t_{\ell}}(f_{\ell})\not\equiv 0 \pmod{\ell}. \]
Let $F_{\ell}:=A_{\ell}\cdot \Sh_{t_{\ell}}(f_{\ell})$.
Then, $F_{\ell} \in S_{2k_{\ell}}(\Gamma_0(N\ell^2/2),\chi^2_{\ell})$.
Let $\mathcal{B}_{\ell}$ be a basis of $S_{2k_{\ell}}(\Gamma_0(N\ell^2/2),\chi^2_{\ell})$ consisting of eigenforms $G$ of $T_{p,2k_{\ell},\chi_{\ell}^2}$ for all $p$ with $(p,N\ell^2/2)=1$ such that the Fourier coefficients of $G$ are algebraic integers. Note that the field generated over the rational number
field $\mathbb{Q}$ by $a_G(n)$ for all $n \geq 1$ is of finite degree over $\mathbb{Q}$. Thus, let $K$ be a number field, containing an $N\ell^2$-th root $\xi_{N\ell^2}$ of unity, such that if $G \in \mathcal{B}_{\ell}$ for some prime $\ell \mid M$, then the Fourier coefficients of $G$ are in $K$.
Then, we have
\[ F_{\ell}=\sum_{G\in \mathcal{B}_{\ell}} c_{G} \cdot G.\]
for some $c_G \in K$. Let $\mathcal{O}_{K}$ be the ring of integers of $K$. Then, for each prime $\ell|M$, there exists a non-negative integer $u(\ell)$ such that for every $G \in \mathcal{B}_{\ell}$, the valuations of $\ell^{u(\ell)} c_G$ at any places lying over $\ell$ are non-negative.

For $x \in \mathcal{O}_{K}\otimes \ZZ_{\ell}$, we denote by $\overline{x}$ the reduction of $x$ in $\mathcal{O}_{K}\otimes \left(\mathbb{Z}_{\ell}/\ell^{v_{\ell}+u(\ell)}\mathbb{Z}_{\ell}\right)$.
For each $G\in \mathcal{B}_{\ell}$, there exists an odd Galois representation $\overline{\rho_{G}}: G_{\Q}\to \GL_2\left(\mathcal{O}_{K}\otimes \mathbb{Z}_{\ell}/\ell^{v_{\ell}+u(\ell)}\mathbb{Z}_{\ell}\right)$ such that for $p$ with $p\nmid N\ell$,
\[ \tr(\overline{\rho_{G}}(\Frob_p))= \overline{\lambda_G(p)} \;\text{ and } \; \det(\overline{\rho_{G}}(\Frob_p))= \overline{\chi^2_{\ell}(p)p^{2k_{\ell}-1}}
\]
in $\mathcal{O}_{K}\otimes \left(\mathbb{Z}_{\ell}/\ell^{v_{\ell}+u(\ell)}\mathbb{Z}_{\ell}\right)$.
We define
\[ \rho_1 : G_{\Q}\to \prod_{\ell\mid M} \prod_{G\in \mathcal{B}_{\ell}} \GL_2\left(\mathcal{O}_{K}\otimes \ZZ_{\ell}/\ell^{v_{\ell}+u(\ell)}\ZZ_{\ell}\right) \]
by
\[
\rho_1:=\prod_{\ell\mid M} \prod_{G\in \mathcal{B}_{\ell}} \overline{\rho_G}.
\]
Let $E_1$ be the subfield of $\overline{\Q}$ fixed by all elements in the kernel of $\rho_1$.
Then, for the identity element $Id$ in $G_{\mathbb{Q}}$, we have
\[\rho_1(Id)=(I_2,I_2,\dots, I_2).\]
Here, $I_2$ denotes the identity matrix of size $2$.

Let $N_M$ be a positive integer defined by
\[N_{M}:=2MN\prod_{\ell\mid M}\ell^2. \]
We define
\[\rho_2:G_{\Q}\to \GL_1\left(\ZZ/N_M\ZZ\right)\]
by $g(\xi_{N_M})=\xi_{N_M}^{\rho_2(g)}$ for $g\in G_{\Q}$. Let $E_2$ be the subfield of $\overline{\Q}$ fixed by all elements in the kernel of $\rho_2$.
Note that if $p\nmid N_M$, then $\rho_2(\Frob_p)\equiv p \pmod{N_M}$.
Thus, $p \equiv 1 \pmod{N_M}$ if and only if $\Frob_p|_{\mathbb{Q}(\xi_{N_M})}=Id|_{\mathbb{Q}(\xi_{N_M})}$.

We define $\rho_3 : G_{\Q}\to \GL_1(\ZZ)$ by $\rho_3(g):=\frac{g(\sqrt{n_0})}{\sqrt{n_0}}$ for $g\in G_{\Q}$. Let $E_3$ be the subfield of $\overline{\Q}$ fixed by all elements in the kernel of $\rho_3$. Then, for every prime $p\nmid 2n_0$, $$\rho_3(\Frob_p)=\left(\frac{n_0}{p}\right).$$
Let $\sigma \in \Gal(\mathbb{Q}(\sqrt{n_0})/\mathbb{Q})$ be a nontrivial automorphism.

We claim that there exists $g \in G_{\mathbb{Q}}$ such that
\begin{enumerate}
    \item $g|_{E_1}=Id|_{E_1}$,
    \item $g|_{E_2}=Id|_{E_2}$,
    \item $g|_{E_3}=\sigma$.
\end{enumerate}
Let $E$ be the composite field of $E_1$, $E_2$ and $E_3$.
Then, by the Chebotarev density theorem (for details, see \cite[Theorem 13.4]{NEU}), there is a positive proportion of primes $p$ such that
$\Frob_p|_{E}=g|_{E}$. The condition (1) implies that for all $G \in \mathcal{B}_{\ell}$, \[ \lambda_{G}(p)\equiv 2\pmod{\ell^{v_{\ell}+u(\ell)}}, \]
and so
\[ F_{\ell}|T_{p,2k_{\ell},\chi_{\ell}^2}-2F_{\ell}\equiv \sum_{G\in \mathcal{B}_{\ell}} c_{G}\left(\lambda_{G}(p)-2\right)G\equiv 0 \pmod{\ell^{v_{\ell}}}. \]
Since the Hecke operators commute with the Shimura Correspondence, we have
\[ f_{\ell}|T_{p^2,k_{\ell}+\frac{1}{2},\chi_{\ell}}\equiv 2 f_{\ell} \pmod{\ell^{v_{\ell}}}. \]
The condition (2) implies that $p \equiv 1 \pmod{N_M}$. The condition (3) implies that for $p$ with $p\nmid 2n_0$,
\[ \left(\frac{n_0}{p}\right)=\rho_3(\Frob_p)=\frac{\sigma(\sqrt{n_0})}{\sqrt{n_0}}=-1.\]
Thus, we obtain the proof of Lemma \ref{lem : main lem 1}.

To complete the proof of Lemma \ref{lem : main lem 1}, we prove the claim. The claim is equivalent to the fact that there exists $g' \in \Gal(E \slash \mathbb{Q})$ such that $g'|_{E_1E_2}=Id|_{E_1E_2}$ and $g'|_{E_3}=\sigma$.
Assume that a prime $p$ is relatively prime to $NM$.
Then, for each $G$ in $\mathcal{B}_{\ell}$, the Galois representation $\overline{\rho_{G}}$ is unramified at $p$.
By the definition of $\rho_1$, we deduce that $\rho_1$ is unramified at $p$.
Since $p\nmid N_{M}$, it follows that $\rho_2$ is unramified at $p$.
Thus, $p$ is unramified in $E_1E_2$.
Note that $E_3=\mathbb{Q}(\sqrt{n_0})$.
Since $n_0\nmid NM$, there is a prime divisor $q$ of $n_0$ such that $q$ is relatively prime to $NM$.
Then, $q$ is unramified in $E_1E_2$ and is ramified in $E_3$.
This implies that $E_1E_2$ does not contain $E_3$.
Thus, $E_3\cap E_1E_2$ is a proper subfield of $E_3$ and so
\[ E_3\cap E_1E_2=\Q. \]

Since $E_1E_2/\Q$ and $E_3/\Q$ are Galois extensions, $E/\Q$ is a Galois extension.
Note that the intersection of $E_3$ and $E_1E_2$ is $\Q$.
By \cite[Chapter 14. Corollary 22]{DS}, we have
\[ \Gal(E/\Q)\cong \Gal(E_1E_2/\Q)\times \Gal(E_3/\Q). \]
Thus, there is $g'\in \Gal(E/\Q)$ such that
$g'|_{E_1E_2}=Id|_{E_1E_2}$ and that $g'|_{E_3}=\sigma$. This completes the proof of the claim.

\end{proof}

We now prove Theorem \ref{thm : thm 1} by using Lemmas \ref{lem : lem of distribution-half} and \ref{lem : main lem 1}.
\begin{proof}[Proof of Theorem \ref{thm : thm 1}]
For each odd prime $p$, let $\chi_{p}^{triv}$ be the trivial character modulo $p$ and
\begin{equation}\label{e : def of F_p}
    F_{p}(z):=\begin{cases}
\frac{\eta^{p^2}(z)}{\eta(p^2z)} \quad &\text{if }p\geq 5, \\
\frac{\eta^{27}(z)}{\eta^3(9z)} \quad &\text{if }p=3.
\end{cases}
\end{equation}
Let $M$ be a positive integer with $M=\prod_{\text{prime }\ell\mid M}\ell^{v_{\ell}}$.
For each prime divisor $\ell$ of $M$, let
\[ f_{\ell}:= (f\otimes \chi_{\ell}^{triv})F_{\ell}^{\ell^{v_{\ell}-1}}.\]
We claim that
\[f_{\ell}(z)\in S_{k_{\ell}+\frac{1}{2}}(\Gamma_0(N\ell^2),\chi)\cap \mathbb{Z}[[q]]\]
such that
\[ f_{\ell}(z)\equiv \sum_{\ell\nmid n} a_{f}(n)q^{n} \pmod{\ell^{v_{\ell}}}.\]
Here,
\[ k_{\ell}:=k+C_{\ell}\cdot\frac{\ell^{v_{\ell}-1}(\ell^2-1)}{2}, \]
where
\[ C_{\ell}:=\begin{cases}
    1  \quad &\text{if $\ell\geq 5$}, \\
    3 \quad &\text{if $\ell=3$}.
\end{cases} \]

Note that $M$ is relatively prime to $Nn_0ma_{f}(n_0m^2)$ and that $n_0\nmid N$.
Thus, we have $n_0\nmid NM$.
Then, Lemma \ref{lem : main lem 1} implies that there is a positive proportion of primes $p$ such that
\[ p\equiv 1\pmod{2MN\ell^2}, \quad \left(\frac{n_0}{p}\right)=-1, \text{ and } f_{\ell}|T_{p^2,k_{\ell}+\frac{1}{2},\chi}\equiv 2f_{\ell} \pmod{\ell^{v_{\ell}}} \]
for every prime divisor $\ell$ of $M$.
Therefore, Lemma \ref{lem : lem of distribution-half} implies that for each integer $r$,
    \[ \#\{ n\leq X : a_{f}(n)\equiv r \pmod{M}\} \gg_{M} \begin{cases}
X \quad &\text{if }r=0,\\
\frac{\sqrt{X}}{\log X} \quad &\text{if }r\neq 0.
\end{cases}\]

To complete the proof of Theorem \ref{thm : thm 1}, we prove the claim.
We follow the proof of Theorem 3.1 in \cite{TR}.
Note that $F_{\ell}$ is a modular form of weight $C_{\ell}\cdot\frac{\ell^2-1}{2}$ on $\Gamma_0(\ell^2)$ and that $F_{\ell}\equiv 1\pmod{\ell}$.
Then, $f_{\ell}$ is a modular form of  weight $k_{\ell}$ on $\Gamma_0(N\ell^2)$ with nebentypus $\chi$ such that
\[ f_{\ell}(z)\equiv \sum_{\ell\nmid n}a_{f}(n)q^n \pmod{\ell^{v_{\ell}}}. \]
Let $\rho:=\left(\begin{smallmatrix}
a & b\\
\ell^2 c & d
\end{smallmatrix}\right)$ be a matrix in $\SL_2(\ZZ)$.
Since $\ell\nmid N$, we obtain that $\left(f\otimes \chi^{triv}_{\ell}\right)|_{k+\frac{1}{2}}\rho$ has the Fourier expansion of the form
\[ \left(f\otimes\chi^{triv}_{\ell}\right)|_{k+\frac{1}{2}}\rho=\sum_{\ell\nmid n} a_{f}\left(\rho : \frac{n}{w_{\rho}}\right)q^{\frac{n}{w_{\rho}}}, \]
where $w_{\rho}$ is a positive integer (see, (2.5) and (3.12) in \cite{TR}).
This implies that $f\otimes \chi^{triv}_{\ell}$ vanishes at every cusp $\frac{a}{\ell^2 c}\in \Q$.
Since $F_{\ell}$ vanishes at every cusp $\frac{a}{c}\in \Q$ with $\ell^2\nmid c$, we conclude that $\left(f\otimes \chi^{triv}_{\ell}\right)F_{\ell}$ vanishes at every cusp $s\in \Q$.
Therefore, we complete the proof of the claim.
\end{proof}

As an application of Theorem \ref{thm : thm 1}, we consider an analogue of Newman's Conjecture for $t$-core partitions (see Section \ref{sss : t-core}).
From the generating function for $c_t(n)$, we have
\[ \frac{\eta^{t}(tz)}{\eta(z)}=\sum_{n=0}^{\infty} c_{t}(n)q^{n+\delta_{t}} \quad \left(\delta_{t}:=\frac{t^2-1}{24}\right). \]
For convenience, let $c_t(s):=0$ if $s$ is not a non-negative integer.
Then, we obtain
\begin{equation}\label{e : generating function of t-core}
    \frac{\eta^{t}(24tz)}{\eta(24z)}=\sum_{n=0}^{\infty} c_t\left(\frac{n+1-t^2}{24} \right) q^{n}\in M_{\frac{t-1}{2}}(\Gamma_0(576t),\chi)
\end{equation}
with real nebentypus $\chi$.
The weight of $\frac{\eta^{t}(24tz)}{\eta(24z)}$ is integral for odd integer $t$. Thus, let us note the following lemma about the distribution of the Fourier coefficients of cusp forms of integral weight modulo an integer $M$, which is given by following the argument of Serre in \cite[\S6.4 and \S6.5]{SE}.

\begin{lem}\label{lem : lem of distribution_integral}
Let $M$, $\ell$ and $v_{\ell}$ be as in Lemma \ref{lem : lem of distribution-half}.
Let $\{a(n)\}$ be a sequence of integers such that there exists a positive integer $n_0$ such that $(n_0a(n_0),M)=1$. Assume that for each $\ell$, there exists $f_{\ell} \in S_{k_{\ell}}(\Gamma_0(N\ell^2),\chi_{\ell})\cap \mathbb{Z}[[q]]$ with integer $k_{\ell}$ such that for all $n$,
\[ a(n)\equiv a_{f_{\ell}}(n) \pmod{\ell^{v_{\ell}}}.\]
Let $c$ be a positive integer.
Then, for each integer $r$,
\[ \#\{ n\leq X : a(n)\equiv r \pmod{M} \text{ and } (n,M)=1 \} \gg_{M,c} \begin{cases}
    X \quad &\text{if }r=0,\\
    \frac{X(\log \log X)^{c}}{\log X} \quad &\text{if }r\neq 0.
    \end{cases}\]
\end{lem}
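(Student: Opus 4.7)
I would mirror the proof of Lemma \ref{lem : lem of distribution-half}, replacing the Shimura-correspondence input by the Deligne construction of $\ell$-adic Galois representations attached to integer-weight Hecke eigenforms, and importing the two density theorems of Serre \cite{SE}, \S6.4 and \S6.5. Concretely, for each prime $\ell \mid M$ I would decompose $f_\ell = \sum_{G \in \mathcal{B}_\ell} c_G\, G$ into integer-weight Hecke eigenforms $G \in S_{k_\ell}(\Gamma_0(N\ell^2),\chi_\ell)$, attach to each $G$ Deligne's representation $\rho_G$, reduce modulo $\ell^{e(\ell)+u(\ell)}$ to obtain $\overline{\rho_G}$, and form the joint representation $\rho_1 := \prod_\ell \prod_G \overline{\rho_G}$ with fixed field $E_1$, exactly as in the proof of Lemma \ref{lem : main lem 1}. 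Combined with the cyclotomic field $E_2$ cutting out $p \equiv 1 \pmod{N_M}$ for $N_M := 2MN\prod_{\ell \mid M} \ell^{e(\ell)+2}$, the Chebotarev density theorem supplies a positive proportion of primes $p$ satisfying $p \equiv 1 \pmod{N_M}$ and $f_\ell \mid T_p \equiv 2 f_\ell \pmod{\ell^{e(\ell)}}$ for every $\ell \mid M$; the Legendre-symbol condition present in Lemma \ref{lem : lem of distribution-half} is not needed here because the weight is integral. For such a $p$ and $(n,p)=1$, the integer-weight Hecke relation reduces to
\[
a_{f_\ell}(np^{e+1}) \equiv 2\, a_{f_\ell}(np^e) - a_{f_\ell}(np^{e-1}) \pmod{\ell^{e(\ell)}},
\]
hence $a_{f_\ell}(np^e) \equiv (e+1)\, a_{f_\ell}(n) \pmod{\ell^{e(\ell)}}$ by induction, and the Chinese remainder theorem yields $a(np^e) \equiv (e+1)\, a(n) \pmod{M}$. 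Since $a(n_0)$ is a unit modulo $M$, this produces, for every residue class $r$, some $m_r$ coprime to $M$ with $a(m_r) \equiv r \pmod M$.

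For the case $r = 0$, I would invoke Serre's theorem \cite{SE}, \S6.4: for a cusp form of integral weight with integer Fourier coefficients, the natural density of $\{n : a_{f_\ell}(n) \equiv 0 \pmod{\ell^{e(\ell)}}\}$ equals one. Intersecting over $\ell \mid M$ via the Chinese remainder theorem, and then intersecting with the set $\{n : (n,M)=1\}$ of density $\prod_{\ell \mid M}(1-1/\ell)>0$, yields $\gg_M X$ integers $n \le X$ with $a(n) \equiv 0 \pmod M$ and $(n,M)=1$.

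For $r \ne 0$ — the main obstacle — I would follow Serre's refinement in \cite{SE}, \S6.5: using the positive-density set of primes $q$ that satisfy additional Chebotarev conditions prescribing the action of $T_q$ on $f_\ell$ modulo $\ell^{e(\ell)}$, one constructs integers $n = m_r \cdot q_1 \cdots q_c$ with distinct $q_i$ in this set, all coprime to $M m_r$, so that the integer-weight Hecke recursion applied inductively at each $q_i$ (together with CRT across $\ell \mid M$) preserves the residue $a(n) \equiv r \pmod M$. The count of such $n \le X$ — integers with exactly $c$ prescribed prime factors drawn from a set of positive density together with the fixed seed $m_r$ — is $\gg_{M,c} X(\log\log X)^c/\log X$ by Landau's asymptotic. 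The main technical hurdle is that $f_\ell$ need not itself be a Hecke eigenform, so the multiplicative behaviour of the Fourier coefficients through a product of many primes $q_i$ must be controlled simultaneously on every eigencomponent $G \in \mathcal{B}_\ell$; this is precisely why the joint representation $\rho_1$ is built over all eigencomponents rather than over a single newform, mirroring the bookkeeping in the proof of Lemma \ref{lem : main lem 1}.
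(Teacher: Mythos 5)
Your proposal follows essentially the same route as the paper: the paper's proof likewise invokes Serre \cite{SE}, \S 6.4 (via the Galois-representation/Chebotarev machinery set up around Lemma \ref{lem : main lem 1}) to produce a positive proportion of primes $p$ with $p\equiv 1\pmod{MN\ell^2}$ and $f_{\ell}|T_{p}\equiv 2f_{\ell}\pmod{\ell^{e(\ell)}}$ for every $\ell\mid M$, derives $a(n_0p^{e})\equiv (e+1)a(n_0)\pmod{M}$ from the integral-weight Hecke recursion exactly as you do, and then defers the counting for both $r=0$ and $r\neq 0$ to \cite{SE}, \S 6.5, which is precisely the argument you sketch. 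One cosmetic caution: with $\lambda_{q}\equiv 2$ the recursion multiplies the residue by $2$ at each new prime $q_i$ rather than preserving it, so in the final step one should seed with $m_r'$ satisfying $a(m_r')\equiv 2^{-c}r\pmod{M}$ (available because $(e+1)a(n_0)$ runs over all residue classes), which does not affect correctness.
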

\begin{proof}
Following the argument in \cite[\S6.4]{SE}, we see that there is a positive proportion of primes $p$ such that for every prime divisor $\ell$ of $M$,
\begin{equation}\label{e : cond dist_inte}
    p\equiv 1\pmod{MN\ell^2} \text{ and } f_{\ell}|T_{p,k_{\ell},\chi_{\ell}}\equiv 2f_{\ell}\pmod{M}.
\end{equation}
Assume that $p$ satisfies \eqref{e : cond dist_inte} and $(p,n_0)=1$.
Since $f_{\ell}|T_{p,k_{\ell},\chi_{\ell}}\equiv 2f_{\ell} \pmod{\ell^{v_{\ell}}}$, it follows that for each non-negative integer $e$,
\[ a_{f_{\ell}}(n_0p^{e+1})+\chi_{\ell}(p)p^{k_{\ell}-1}a_{f_{\ell}}(n_0p^{e-1})\equiv 2 a_{f_{\ell}}(n_0p^{e}) \pmod{\ell^{v_{\ell}}}. \]
Note that $p\equiv 1\pmod{MN\ell^2}$ and $(p,n_0)=1$.
Thus, we obtain
\[ a_{f_{\ell}}(n_0p^{e+1})\equiv \begin{cases}
2a_{f_{\ell}}(n_0) &\pmod{\ell^{v_{\ell}}} \quad \text{if }e=0,\\
2a_{f_{\ell}}(n_0p^{e})-a_{f_{\ell}}(n_0p^{e-1}) &\pmod{\ell^{v_{\ell}}} \quad \text{if }e>0.
\end{cases}\]
From this, we have
\[ a_{f_{\ell}}(n_0p^{e})\equiv (e+1)a_{f_{\ell}}(n_0)\pmod{\ell^{v_{\ell}}}. \]
By the Chinese Remainder Theorem, we obtain
\begin{equation}\label{eq 5}
    a(n_0p^{e})\equiv (e+1)a(n_0) \pmod{M}. 
\end{equation}
The remaining part of the proof of Lemma \ref{lem : lem of distribution_integral} is similar to \cite[\S6.5]{SE} as follows.

Since $(a(n_0),M)=1$, \eqref{eq 5} implies that for each integer $r$ with $M\nmid r$, there is a positive integer $m_r$ such that 
\[ a(m_r)\equiv 2^{-c-1}r\pmod{M}. \]
Assume that $\{p_1,\dots, p_{c+1}\}$ is a set of primes relatively prime to $m_r$ such that each $p_i$ satisfies \eqref{e : cond dist_inte}.
By \eqref{eq 5}, we have 
\[ a(m_r p_1 \dots p_{c+1})\equiv 2^{c+1}\cdot 2^{-c-1}r\equiv r \pmod{M}. \]
Landau \cite{LA} proved that the number of integers less than $X$ that are the product of distinct $s+1$ primes is asymptotically equivalent to $\frac{X(\log \log X)^{s}}{s! \log X}$ as $X$ goes to infinity.  
Since a positive proportion of primes $p$ satisfy $(p,m_r)=1$ and \eqref{e : cond dist_inte}, we obtain by the Landau's result \cite{LA} that if $r\not\equiv 0\pmod{M}$, then  
\[ \#\{n\leq X : a(n)\equiv r \pmod{M} \text{ and } (n,M)=1 \}\gg_{M,c} \frac{X(\log \log X)^{c}}{\log X}. \]
Thus, to complete the proof, we assume that $r\equiv 0\pmod{M}$.

By the result of Serre \cite[\S 6.4]{SE}, there is a prime $p$ such that 
\[ f_{\ell}|T_{p,k_{\ell},\chi_{\ell}}\equiv 0\pmod{M}. \]
For any positive integers $n$ with $(n,p)=1$, we have 
\begin{equation*}
    \begin{aligned}
        a_{f_{\ell}}(pn)+\chi_{\ell}(p)p^{k_{\ell}-1}a_{f_{\ell}}(n/p)\equiv a_{f_{\ell}}(pn)\equiv 0\pmod{M}.
    \end{aligned}
\end{equation*}
This implies that if $(n,p)=1$, then $a(pn)\equiv 0\pmod{M}$. 
Therefore, we complete the proof.

\end{proof}
We now prove Theorem \ref{cor : cor 1} by using Theorem \ref{thm : thm 1} and Lemma \ref{lem : lem of distribution_integral}.

\begin{proof}[Proof of Theorem \ref{cor : cor 1}]
Let $a_t(n)$ be the $n$-th Fourier coefficient of $\frac{\eta^{t}(24tz)}{\eta(24z)}$.
Let us recall from \eqref{e : generating function of t-core} that
\[ a_t(n)=c_t\left(\frac{n+1-t^2}{24} \right). \]
Assume that $t$ is an even integer larger than $2$.
By the definition of $t$-core partition, we have $a_t(t^2-1)=c_t(0)=1$ and $a_t(t^2+23)=c_t(1)=1$.
Let $n_0$ and $n_1$ be square-free integers such that $n_0m_0^2=t^2-1$ and $n_1m_1^2=t^2+23$ for some integers $m_0$ and $m_1$.

To get a contradiction, we assume that $n_0$ and $n_1$ both divide $576t$.
Since $(2t,n_0)=(2t,n_1)=1$, it follows that $n_0\mid 3$ and $n_1\mid 3$.
Note that $t^2-1\equiv t^2+23\equiv 3\pmod{4}$.
Thus, we have $n_0=n_1=3$.
From this, we obtain $m_1^2-m_0^2=8$ and so $(m_1^2,m_0^2)=(9,1)$ and $t=2$.
This contradicts the hypothesis that $t>2$.
Hence, at least one of $n_0$ and $n_1$ does not divide $576t$.
Recall that
$$\frac{\eta^{t}(24tz)}{\eta(24z)}\in M_{\frac{t-1}{2}}\left(\Gamma_0(576t),\chi \right).$$
Thus, $\frac{\eta^{t}(24tz)}{\eta(24z)}$ is not a linear combination of single-variable theta series since $n_0\nmid 576t$ or $n_1\nmid 576t$.
Therefore, Theorem \ref{thm : thm 1} implies that if $M$ is relatively prime to $3t(t^2-1)(t^2+23)$, (i.e., $M$ is relatively prime to $576t(t^2-1)(t^2+23)$), then for each integer $r$,
\[\#\{ n\leq X : c_{t}(n)\equiv r \pmod{M} \} \gg_{M} \begin{cases}
X \quad &\text{if }r=0,\\
\frac{\sqrt{X}}{\log X} \quad &\text{if }r\neq 0.
\end{cases}\]

We assume that $t$ is an odd integer, and a positive integer $M$ is relatively prime to $3t(t^2-1)$.
For each prime divisor $\ell$ of $M$, let $v_{\ell}$ be the maximum of positive integers $e$ such that $\ell^e \mid M$.
Let $F_{\ell}$ be as in \eqref{e : def of F_p} and
\[ f_{\ell}:=\left(\frac{\eta^{t}(24tz)}{\eta(24z)}\otimes \chi_{\ell}^{triv} \right)F_{\ell}^{\ell^{v_{\ell}-1}}. \]
Note that $(\ell,576t)=1$. By following the proof of Theorem \ref{thm : thm 1}, we deduce that $f_{\ell}$ is a cusp form of integral weight on $\Gamma_0(576t\ell^2)$ with nebentypus $\chi$, and
\[ f_{\ell}(z)\equiv \sum_{\ell\nmid n} a_{t}(n)q^n \pmod{\ell^{v_{\ell}}}.\]
Since $(M,t^2-1)=1$, we have the following: for every prime divisor $\ell$ of $M$,
\[ a_{f_{\ell}}(t^2-1)\equiv a_{t}(t^2-1) \pmod{\ell^{v_{\ell}}}. \]
Note that $a_{t}(t^2-1)=c_t(0)=1$, and that $M$ is relatively prime to $576t(t^2-1)$ if and only if $M$ is relatively prime to $3t(t^2-1)$.  Therefore, Lemma \ref{lem : lem of distribution_integral} shows that for each integer $r$,
 \[ \#\{ n\leq X : c_t(n)\equiv r \pmod{M} \} \gg_{M,c} \begin{cases}
    X \quad &\text{if }r=0,\\
    \frac{X(\log \log X)^{c}}{\log X} \quad &\text{if }r\neq 0,
    \end{cases}\]
where $c$ is a positive integer.
\end{proof}

\section{Proofs of Theorems \ref{thm : thm 2}, \ref{thm : thm of weak}, and \ref{thm : thm 3}}\label{s : section of weak}
In this section, we study the distribution of the Fourier coefficients of weakly holomorphic modular forms with integral coefficients  modulo an integer $M$, and we prove Theorems \ref{thm : thm 2}, \ref{thm : thm of weak}, and \ref{thm : thm 3}.
Note that Theorems \ref{thm : thm 2} and \ref{thm : thm 3} are implied by Theorem \ref{thm : thm of weak}.

Let us recall some notation defined in the previous sections. 
For a positive integer $d$, $B_{d}$ denotes the set of positive integers $M$ such that the number of prime divisors of $M$ is $d$.
Let $k$ be an integer and $N$ be a positive integer.
If $k$ is an odd integer, then assume that $4\mid N$.
For each weakly holomorphic modular form $f$ of weight $\frac{k}{2}$ on $\Gamma_0(N)$ with nebentypus $\chi$ and  $\rho\in \SL_2(\mathbb{Z})$, $f|_{\frac{k}{2}}\rho$ having the form
\[\left(f|_{\frac{k}{2}} \rho\right) (z) = \sum_{n\gg -\infty} a_{f}\left(\rho : \frac{n}{w_{\rho}} \right) q^{\frac{n}{w_{\rho}}}  \]
for some positive integer $w_{\rho}$.
We define $\Omega(f)$ as the subset of $\mathbb{Q}^{\times}$ consisting of negative $\frac{n}{w_{\rho}}$ for which $a_{f}\left(\rho:\frac{n}{w_{\rho}}\right)\neq 0$ for some $\rho\in \SL_2(\ZZ)$, and $\overline{\Omega(f)}$ denotes the projection of $\Omega(f)$ onto $\Q^{\times}/\Q^{\times^2}$.

Before proving Theorem \ref{thm : thm of weak}, let us give the result of Treneer \cite{TR} on the construction of a cusp form which is congruent to a linear combination of quadratic twists of $f$ modulo a power of a prime.

\begin{thm}[Theorem 3.1 in \cite{TR}]\label{thm : thm of Tr}
Suppose that $\ell$ is an odd prime, and that $k$ is an integer. Let $N$ be a positive
integer with $4 |N$ and $(N, \ell) = 1$. Let $K$ be a number field with ring of integers $\mathcal{O}_K$, and
suppose that $f$ is a weakly holomorphic modular form of weight $\frac{k}{2}$ on $\Gamma_1(N)$ with coefficients in $\mathcal{O}_K$. Let $\{r_1,\dots, r_{u}\}$ be a set of representatives of $\overline{\Omega(f)}$ such that every $r_i$ is an integer.
Assume that $\left(\ell,N \prod_{i=1}^{u}r_i \right)=1$. If $(\frac{r_i}{\ell})$ are the same for all $r_i$,
then for every positive integer $e$, there is an integer $\beta \geq e-1$ and a cusp form
\[ f_{\ell,e}\in S_{\frac{k+\ell^{\beta}(\ell^2-1)}{2}}(\Gamma_1(N\ell^2))\cap \mathcal{O}_K[[q]] \]
with the property that
\[ f_{\ell,e}(z)\equiv \sum_{\left(\frac{r_1n}{\ell}\right)=-1} a_{f}(n)q^n \pmod{\ell^e}. \]
\end{thm}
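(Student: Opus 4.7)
The plan is to mirror the proof of Theorem \ref{thm : thm 1}, using Treneer's Theorem \ref{thm : thm of Tr} as the source of a Hecke-friendly cusp form in place of the ad hoc eta-quotient construction $F_\ell$ used in the holomorphic case. Explicitly, I would identify a set $P$ of ``admissible'' odd primes of natural density $1/2^{s(f)-1}$ such that every $M\in B_d$ whose prime divisors all lie in $P$ (and avoid a fixed finite set depending on $f$) belongs to $\mathcal{N}(f)$. A standard integer-counting estimate then upgrades this to the density lower bound $\left(\tfrac{1}{2}\right)^{d(s(f)-1)}$.

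Fix square-free integer representatives $\{r_1,\dots,r_{s(f)}\}$ of $\overline{\Omega(f)}$, and let $P$ be the set of odd primes $\ell$ coprime to $N\prod_i r_i$ for which $\left(\frac{r_i}{\ell}\right)=\left(\frac{r_1}{\ell}\right)$ for every $i=2,\dots,s(f)$. Since the $r_i$ are distinct classes in $\mathbb{Q}^{\times}/\mathbb{Q}^{\times 2}$, quadratic reciprocity translates these $s(f)-1$ conditions into independent congruence conditions on $\ell$ modulo a fixed modulus, whence $P$ has density $1/2^{s(f)-1}$. For each $\ell\in P$ and each positive integer $e$, Theorem \ref{thm : thm of Tr} supplies a cusp form $f_{\ell,e}$ on $\Gamma_1(N\ell^2)$ with
\[
    f_{\ell,e}(z) \equiv \sum_{\left(\frac{r_1 n}{\ell}\right)=-1} a_f(n)\, q^n \pmod{\ell^{e}}.
\]

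For such $M$, to show $M\in\mathcal{N}(f)$ I would run the Chebotarev-Shimura argument of Lemma \ref{lem : main lem 1} on the family $\{f_{\ell,e(\ell)}\}_{\ell\mid M}$, producing primes $p$ and $q$ that simultaneously satisfy the congruence, norm-residue, and Hecke-eigenvalue conditions appearing in the proof of Lemma \ref{lem : lem of distribution-half} (respectively Lemma \ref{lem : lem of distribution_integral} when $k$ is even). The hypothesis that $f$ is not a linear combination of single-variable theta series supplies a square-free $n_0$ with $a_f(n_0)$ coprime to $M$ and $\left(\frac{r_1 n_0}{\ell}\right)=-1$ for every $\ell\mid M$. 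Because Legendre symbols are invariant under squaring, each index $n_0 p^{2e}$ produced by the Lemma \ref{lem : lem of distribution-half} argument automatically lies in the chunk $\left\{n:\left(\frac{r_1 n}{\ell}\right)=-1\right\}$ for every $\ell\mid M$, so the partial congruence from Theorem \ref{thm : thm of Tr} upgrades to $a_f(n_0 p^{2e})\equiv (2e+1)a_f(n_0)\pmod{\ell^{e(\ell)}}$ for each $\ell\mid M$, and the Chinese remainder theorem delivers the congruence modulo $M$. The parallel manipulation with indices $q^2 m_r$ handles general $r\neq 0$, using that $m_r$ can be taken of the form $n_0 p^{2 e_r}$ and is therefore already in the chunk.

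The final density estimate in $B_d$ is a classical Landau-Selberg-Delange-type count: if $P$ has density $\delta$, then
\[
    \#\{M\leq X : M\in B_d,\ \text{every prime divisor of $M$ lies in $P$}\} \sim \delta^{d}\cdot\#\{M\leq X : M\in B_d\}
\]
as $X\to\infty$, which with $\delta=1/2^{s(f)-1}$ yields the theorem. The main technical concern throughout is reconciling the partial congruence of Theorem \ref{thm : thm of Tr} (valid only on the chunk) with the full-congruence hypothesis of Lemma \ref{lem : lem of distribution-half}; the resolution is the combinatorial observation that Legendre symbols are stable under squaring, which forces the specific indices $n_0 p^{2e}$ and $q^2 m_r$ to lie in the intersection of the $\ell$-chunks over $\ell\mid M$ simultaneously.
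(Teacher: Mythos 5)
Your proposal does not prove the statement in question; it assumes it. The statement to be proved is Treneer's Theorem (Theorem \ref{thm : thm of Tr}): the \emph{existence} of a cusp form $f_{\ell,e}\in S_{\frac{k+\ell^{\beta}(\ell^2-1)}{2}}(\Gamma_1(N\ell^2))\cap\mathcal{O}_K[[q]]$ congruent modulo $\ell^e$ to the ``chunk'' $\sum_{(\frac{r_1n}{\ell})=-1}a_f(n)q^n$ of a weakly holomorphic form $f$. Your write-up instead takes this existence as a black box (``Theorem \ref{thm : thm of Tr} supplies a cusp form $f_{\ell,e}$'') and then sketches the density argument for Theorem \ref{thm : thm of weak}. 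Nothing in your text constructs $f_{\ell,e}$, verifies holomorphy and vanishing at the cusps, or establishes the claimed congruence, so the actual content of the statement is untouched. (In the paper this result is simply cited from \cite{TR}; the accompanying remark records the construction.)

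A proof along the paper's (i.e.\ Treneer's) lines would set
\[ f_{\ell,e}:=\frac{1}{2}\left(f\otimes \chi_{\ell}^{triv}-\left(\frac{r_1}{\ell}\right)f\otimes \left(\frac{\cdot}{\ell}\right)\right)F_{\ell}^{\ell^{\beta}}, \]
with $F_{\ell}$ as in \eqref{e : def of F_p}, and then check three things: first, the twist combination isolates exactly the coefficients $a_f(n)$ with $\left(\frac{r_1n}{\ell}\right)=-1$; second, since $F_{\ell}\equiv 1\pmod{\ell}$, one has $F_{\ell}^{\ell^{\beta}}\equiv 1\pmod{\ell^{\beta+1}}$, so the congruence modulo $\ell^{e}$ survives for $\beta\geq e-1$; third --- and this is where the hypothesis that $\left(\frac{r_i}{\ell}\right)$ is independent of $i$ is essential --- the twisted form vanishes at every cusp $\frac{a}{c}$ with $\ell^2\mid c$ (the twist combination annihilates all principal parts there because every negative exponent in $\Omega(f)$ lies in a class $r_i$ with the same quadratic character mod $\ell$), while $F_{\ell}$ vanishes to positive order at every cusp $\frac{a}{c}$ with $\ell^2\nmid c$, so taking $\beta$ sufficiently large makes the product vanish at all cusps and hence cuspidal. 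Your proposal contains none of these verifications, so as a proof of the stated theorem it has a complete gap rather than a reparable one; what you have written is instead a (reasonable) outline of how the theorem is \emph{applied} later in the paper.
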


\begin{rmk}
In \cite{TR}, Treneer constructed a cusp form
\[ f_{\ell,e}:=\frac{1}{2}\left(f\otimes \chi_{\ell}^{triv}-\left(\frac{r_1}{\ell}\right)f\otimes \left(\frac{\cdot}{\ell}\right)\right)F_{\ell}^{\ell^{\beta}} \]
for some sufficiently large integer $\beta\geq e-1$.
Here, $F_{\ell}$ is defined as in \eqref{e : def of F_p}.
Note that if $f$ is a weakly holomorphic modular form of weight $\frac{k}{2}$ on $\Gamma_0(N)$ with nebentypus $\chi$, then $f_{\ell,e}$ is a cusp form of weight $\frac{k+\ell^{\beta}(\ell^2-1)}{2}$ on $\Gamma_0(N\ell^2)$ with nebentypus $\chi$.
\end{rmk}

Let $M:=\prod_{\text{prime }\ell\mid M}\ell^{v_{\ell}}$ be an odd integer with $(M,N)=1$. To prove Theorem  \ref{thm : thm of weak}, we apply Lemmas \ref{lem : lem of distribution-half}, \ref{lem : main lem 1}, and \ref{lem : lem of distribution_integral} to $f_{\ell,v_{\ell}}$ with the following lemma.

\begin{lem}\label{lem : density of k-primes}
Let $P$ be the set of primes, and $A$ be a subset of $P$.
For a positive integer $m$, let
\[\pi_{m,A}(X):=\#\left\{\prod_{i=1}^{m} p_i\leq X:  p_i \text{ are distinct primes in } A \right\}. \]
Assume that
\[ \lim_{X\to \infty} \frac{\pi_{1,A}(X)}{\pi_{1,P}(X)}=\mu \]
for some $\mu$ with $0\leq \mu\leq 1$.
Then, for each positive integer $m$,
\begin{equation*}
     \lim_{X\to \infty} \frac{\pi_{m,A}(X)}{\pi_{m,P}(X)}=\mu^{m}.
\end{equation*}
\end{lem}
\begin{proof}
We prove Lemma \ref{lem : density of k-primes} by mathematical induction on $m$.
When $m=1$, Lemma \ref{lem : density of k-primes} holds by the assumption.
Now, we assume that Lemma \ref{lem : density of k-primes} holds for all positive integers $m$ less than $k_0+1$, where $k_0$ is a fixed positive integer.
Note that
\[ \#\left\{p_{m+1}^2\prod_{i=1}^{m}p_i\leq X : p_i \text{ are distinct primes in } A  \right\}=-(m+1)\pi_{m+1,A}(X)+\sum_{\substack{p\leq X \\ p\in A}} \pi_{m,A}\left(\frac{X}{p}\right). \]
From this, we obtain that
\begin{equation}\label{e : lemma 2.2-1}
\begin{aligned}
-(m+1)\pi_{m+1,P}(X)+\sum_{\text{prime }p\leq X} \pi_{m,P}\left(\frac{X}{p}\right)&\geq -(m+1)\pi_{m+1,A}(X)+ \sum_{\substack{p\leq X \\ p\in A}} \pi_{m,A}\left(\frac{X}{p}\right)\\
&\geq 0.
\end{aligned}
\end{equation}
From the result of Landau \cite[pp. 208-209]{LA}, we obtain that
\begin{equation}\label{eq 1}
    -(m+1)\pi_{m+1,P}(X)+\sum_{\text{prime }p\leq X}\pi_{m,P}\left(\frac{X}{p}\right)=o\left(\frac{X(\log \log X)^{m-1}}{\log X} \right).
\end{equation}
Thus, we have by (\ref{e : lemma 2.2-1})
\begin{equation}\label{e: lemma 2.2-4}
     \pi_{m+1,A}(X)=\frac{1}{m+1}\sum_{\substack{p\leq X\\ p\in A}}\pi_{m,A}\left(\frac{X}{p}\right)+o\left(\frac{X(\log \log X)^{m-1}}{\log X}\right).
\end{equation}

Let us consider the asymptotic behavior of
\begin{equation*}
    \sum_{\substack{p\leq X \\ p\in A}}\pi_{k_0,A}\left(\frac{X}{p}\right)
\end{equation*}
as $X\to \infty$.
By the induction hypothesis, we have
\[ \lim_{X\to \infty} \frac{\pi_{k_0,A}(X)}{\pi_{k_0,P}(X)}=\mu^{k_0}. \]
Thus, given a positive real number $\epsilon$, there is a constant $M(k_0,\epsilon)>0$ such that for all $X>M(k_0,\epsilon)$,
\[ \left|\frac{\pi_{k_0,A}(X)}{\pi_{k_0,P}(X)}-\mu^{k_0} \right|<\frac{\mu^{k_0}\epsilon}{3}. \]
Then, we get
\begin{equation*}
    \begin{aligned}
        \sum_{p\leq X}\pi_{k_0,A}\left(\frac{X}{p}\right)&=\sum_{p\leq \frac{X}{M(k_0,\epsilon)}}\pi_{k_0,A}\left(\frac{X}{p}\right)+\sum_{p>\frac{X}{M(k_0,\epsilon)}}\pi_{k_0,A}\left(\frac{X}{p}\right)\\
        &\leq \sum_{p\leq \frac{X}{M(k_0,\epsilon)}}\pi_{k_0,A}\left(\frac{X}{p}\right)+\pi_{1,P}(X)\pi_{k_0,A}(M(k_0,\epsilon))\\
        &\leq \mu^{k_0}\left(1+\frac{\epsilon}{3}\right)\sum_{p\leq \frac{X}{M(k_0,\epsilon)}}\pi_{k_0,P}\left(\frac{X}{p}\right)+\pi_{1,P}(X)\pi_{k_0,A}(M(k_0,\epsilon))\\
        &\leq \mu^{k_0}\left(1+\frac{\epsilon}{3}\right)\sum_{p\leq X}\pi_{k_0,P}\left(\frac{X}{p}\right)+\pi_{1,P}(X)\pi_{k_0,A}(M(k_0,\epsilon)).
    \end{aligned}
\end{equation*}
By \eqref{eq 1}, we have
\[ \sum_{p\leq X}\pi_{k_0,P}\left(\frac{X}{p}\right)=(k_0+1)\pi_{k_0+1,P}(X)+o\left(\frac{X(\log \log X)^{k_0-1}}{\log X}\right). \]
For each positive integer $\nu$, Landau \cite[pp. 211]{LA} proved that 
\[ \pi_{\nu,P}(X)\sim \frac{X(\log \log X)^{\nu-1}}{(\nu-1)!\log X}. \]
Here, we write $f(X)\sim g(X)$ if
\[ \lim_{X\to\infty} \frac{f(X)}{g(X)}=1. \]
Thus, we obtain that
\[ \sum_{p\leq X}\pi_{k_0,P}\left(\frac{X}{p}\right)\sim (k_0+1)\frac{X(\log \log X)^{k_0}}{k_0! \log X}. \]
Since $\pi_{1,P}(X)\sim \frac{X}{\log X}$, it follows that there is a constant $N(k_0,\epsilon)$ such that for all $X>N(k_0,\epsilon)$,
\[ \pi_{1,P}(X)\pi_{k_0,A}(M(k_0,\epsilon))<\frac{\mu^{k_0}\epsilon}{3}\sum_{p\leq X}\pi_{k_0,P}\left(\frac{X}{p}\right). \]
Hence, we obtain that
\[ \sum_{p\leq X}\pi_{k_0,A}\left(\frac{X}{p}\right)\leq \mu^{k_0}\left(1+\frac{2\epsilon}{3}\right)\sum_{p\leq X}\pi_{k_0,P}\left(\frac{X}{p}\right). \]

If $X>N(k_0,\epsilon)$, then we have
\begin{equation*}
    \begin{aligned}
        \sum_{p\leq X}\pi_{k_0,A}\left(\frac{X}{p}\right)&\geq \sum_{p\leq \frac{X}{M(k_0,\epsilon)}}\pi_{k_0,A}\left(\frac{X}{p}\right)\\
        &\geq \mu^{k_0}\left(1-\frac{\epsilon}{3}\right)\sum_{p\leq \frac{X}{M(k_0,\epsilon)}}\pi_{k_0,P}\left(\frac{X}{p}\right)\\
        &=\mu^{k_0}\left(1-\frac{\epsilon}{3}\right)\left(\sum_{p\leq X}\pi_{k_0,P}\left(\frac{X}{p}\right) - \sum_{p>\frac{X}{M(k_0,\epsilon)}}\pi_{k_0,P}\left(\frac{X}{p}\right)\right)\\
        &\geq \mu^{k_0}\left(1-\frac{\epsilon}{3}\right)\sum_{p\leq X}\pi_{k_0,P}\left(\frac{X}{p}\right) - \sum_{p>\frac{X}{M(k_0,\epsilon)}}\pi_{k_0,P}\left(\frac{X}{p}\right)\\
        &\geq\mu^{k_0}\left(1-\frac{2\epsilon}{3}\right)\sum_{p\leq X}\pi_{k_0,P}\left(\frac{X}{p}\right).
    \end{aligned}
\end{equation*}
Thus, we obtain that
\begin{equation*}
    \sum_{p\leq X}\pi_{k_0,A}\left(\frac{X}{p}\right) \sim \mu^{k_0}\sum_{p\leq X}\pi_{k_0,P}\left(\frac{X}{p}\right)\sim \mu^{k_0}(k_0+1)\pi_{k_0+1,P}(X)
\end{equation*}
and that
\begin{equation}\label{eq 2}
\begin{aligned}
    \sum_{p\leq X} \pi_{k_0,A}\left(\frac{X}{p}\right) = \mu^{k_0}(k_0+1)\pi_{k_0+1,P}(X)+o\left(\frac{X(\log \log X)^{k_0}}{\log X} \right).
\end{aligned}
\end{equation}

We claim that 
\begin{equation}\label{eq 8}
    \sum_{\substack{p\leq X \\ p\in A}} \pi_{k_0,A}\left(\frac{X}{p}\right) =\mu\sum_{p\leq X} \pi_{k_0,A}\left(\frac{X}{p}\right)+ o\left(\frac{X(\log \log X)^{k_0}}{\log X}\right).
\end{equation}
Then, we have the following by (\ref{e: lemma 2.2-4}), \eqref{eq 2}, and \eqref{eq 8}:
\begin{equation*}
    \begin{aligned}
    \pi_{k_0+1,A}(X)&=\frac{1}{k_0+1}\sum_{\substack{p\leq X \\ p\in A}}\pi_{k_0,A}\left(\frac{X}{p}\right)+o\left(\frac{X(\log \log X)^{k_0-1}}{\log X}\right)\\
    &=\frac{\mu}{k_0+1}\sum_{p\leq X}\pi_{k_0,A}\left(\frac{X}{p}\right)+o\left(\frac{X(\log \log X)^{k_0}}{\log X}\right)\\
    &=\mu^{k_0+1}\pi_{k_0+1,P}(X)+o\left(\frac{X(\log \log X)^{k_0}}{\log X}\right)\\
    &\sim \mu^{k_0+1}\pi_{k_0+1,P}(X).
    \end{aligned}
\end{equation*}

To complete the proof, we prove the claim.
By Abel's summation formula, we obtain
\[ \sum_{\substack{p\leq X \\ p\in A}}\pi_{k_0,A}\left(\frac{X}{p}\right)=\sum_{n=1}^{X-1} \pi_{1,A}(n)\left(\pi_{k_0,A}\left(\frac{X}{n}\right)-\pi_{k_0,A}\left(\frac{X}{n+1}\right) \right)+\pi_{1,A}(X)\pi_{k_0,A}(1) \]
and
\[ \sum_{p\leq X}\pi_{k_0,A}\left(\frac{X}{p}\right)=\sum_{n=1}^{X-1} \pi_{1,P}(n)\left(\pi_{k_0,A}\left(\frac{X}{n}\right)-\pi_{k_0,A}\left(\frac{X}{n+1}\right) \right)+\pi_{1,P}(X)\pi_{k_0,A}(1). \]
Note that
\begin{equation*}
    \begin{aligned}
    \left|\sum_{n=1}^{[\sqrt{\log \log X}]} \pi_{1,P}(n)\left(\pi_{k_0,A}\left(\frac{X}{n}\right)-\pi_{k_0,A}\left(\frac{X}{n+1}\right) \right) \right|&\leq \sum_{n=1}^{[\sqrt{\log \log X}]}\pi_{1,P}(n)\pi_{k_0,A}(X)\\
    &\leq \pi_{1,P}\left(\sqrt{\log \log X} \right)\sqrt{\log \log X}\pi_{k_0,A}(X)\\
    &=o\left(\frac{X(\log \log X)^{k_0}}{\log X} \right).
    \end{aligned}
\end{equation*}
Thus, we have
\begin{equation*}
    \sum_{p\leq X}\pi_{k_0,A}\left(\frac{X}{p}\right)=\sum_{n>[\sqrt{\log \log X}]}^{X-1}\pi_{1,P}(n)\left(\pi_{k_0,A}\left(\frac{X}{n}\right)-\pi_{k_0,A}\left(\frac{X}{n+1}\right) \right) + o\left(\frac{X(\log \log X)^{k_0}}{\log X}\right).
\end{equation*}
Since $\mu\cdot \pi_{1,P}(n)\sim \pi_{1,A}(n)$ and $\sum_{p\leq X}\pi_{k_0,A}\left(
\frac{X}{p}\right)\sim \frac{\mu^{k_0}(k_0+1)}{k_0!}\frac{X(\log \log X)^{k_0}}{\log X},$ we have
\[ \sum_{\substack{p\leq X \\ p\in A}} \pi_{k_0,A}\left(\frac{X}{p}\right)= \mu\sum_{p\leq X} \pi_{k_0,A}\left(\frac{X}{p}\right)+o\left(\frac{X(\log \log X)^{k_0}}{\log X} \right). \]

\end{proof}

Now, we are ready to prove Theorem \ref{thm : thm of weak}.

\begin{proof}[Proof of Theorem \ref{thm : thm of weak}]
Let $M:=\prod_{\ell\mid M}\ell^{v_{\ell}}$ be a positive integer such that $(M,N)=1$, where $\ell$'s denote primes and $v_{\ell}$'s denote positive integers.
We choose a set of representatives $\{r_1,\dots, r_{s(f)}\}$ of $\overline{\Omega(f)}$ such that every $r_i$ is a square-free integer.
Let $\mathcal{P}(f)$ be the set of odd primes $\ell$ such that $(\ell,Nr_i)=1$ and $(\frac{r_i}{\ell})$ are the same for all $i\in \{1,\dots,s(f)\}$.
We have by Theorem \ref{thm : thm of Tr} that if $\ell\in \mathcal{P}(f)$ and $\ell\mid M$, then there is a positive integer $\beta\geq v_{\ell}-1$ and a cusp form
\[f_{\ell}\in S_{\frac{k+\ell^{\beta}(\ell^2-1)}{2}}\left(\Gamma_0(N\ell^2),\chi\right)\cap \mathbb{Z}[[q]] \]
such that
\begin{equation*}
    f_{\ell}(z)\equiv  \sum_{\left(\frac{r_1n}{\ell}\right)=-1}a_{f}(n)q^n  \pmod{\ell^{v_{\ell}}}.
\end{equation*}
For a positive integer $n$, let $\mathcal{P}_{n}$ be a subset of $\mathcal{P}(f)$ consisting of primes $\ell$ such that
\[ \left(\frac{r_1n}{\ell}\right)=-1 \text{ and } \ell \nmid a_{f}(n). \]
Let $n_0$ be a square-free positive integer with $n_0\nmid N$, and let $m$ be a non-zero integer.
Assume that $\ell\in \mathcal{P}_{n_0m^2}$ for every prime divisor $\ell$ of $M$.
Then, we have $n_0\nmid NM$ and $(n_0m^2a_{f}(n_0m^2),M)=1$.
If $k$ is odd, then Lemmas \ref{lem : lem of distribution-half} and \ref{lem : main lem 1} imply $M\in \mathcal{N}(f)$.
If $k$ is even, then by Lemma \ref{lem : lem of distribution_integral}, we have $M\in \mathcal{N}(f)$.
Thus, for each pair $(n_0,m)$, let us consider the number of $M \leq X$ such that $\ell\in \mathcal{P}_{n_0m^2}$ for every prime divisor $\ell$ of $M$.

Let $B_{d}^{sq}$ be the subset of $B_{d}$ consisting of square-free integers.
Note that
\[\#\{M\leq X : M\in B_{d} \} \sim \#\{M\leq X : M\in B_{d}^{sq} \}\]
(see \cite[Chapter II.6]{TE}).
For each positive integer $n$, let $B_{d,n}^{sq}$ be the subset of $B_{d}^{sq}$ consisting of $M$ such that every prime divisor $\ell$ of $M$ satisfies $\ell\in \mathcal{P}(f)$ and $\left(\frac{r_1n}{\ell}\right)=-1$.
For each odd prime $\ell$, note that $\ell\in \mathcal{P}(f)$ and $\left(\frac{r_1n}{\ell}\right)=-1$ if and only if $(\ell,N)=1$ and $\left(\frac{r_i}{\ell}\right)=-\left(\frac{n}{\ell}\right)$ for all $i\in \{1,\dots,s(f)\}$. 
This implies that the density of primes $\ell$ such that $\ell\in \mathcal{P}(f)$ and $\left(\frac{r_1n}{\ell}\right)=-1$ is $\left(\frac{1}{2}\right)^{s(f)}$.
Then, by Lemma \ref{lem : density of k-primes}, we have
\[ \lim_{X\to \infty} \frac{\#\{M\leq X : M\in B_{d,n}^{sq} \}}{\pi_{d,P}(X)}=\left(\frac{1}{2}\right)^{d s(f)}.  \]

By assumption $f$ is not a linear combination of single-variable theta series.
By \cite[Th\'{e}or\`{e}me 3]{V}, there are infinitely many square-free integers $n_i\nmid N$, such that $a_{f}(n_im_i^2)\neq 0$ for some positive integer $m_i$.
By Lemma \ref{lem : density of k-primes} and the principle of inclusion and exclusion, we have the following: for each positive integer $t$,
\[ \lim_{X\to \infty} \frac{\#\left\{M\leq X :M\in \cup_{i=1}^{t} B_{d,n_im_i^2}^{sq}  \right\}}{\pi_{d,P}(X)}=\left(1-\left(1-\frac{1}{2^d}\right)^t\right)\left(\frac{1}{2}\right)^{d(s(f)-1)}. \]
For a positive real number $\epsilon$ with $\epsilon<1$, we take a positive integer $t$ such that
\[ \left(1-\frac{1}{2^d}\right)^t<\epsilon. \]
Note that if $M\in B^{sq}_{d,n}$ and $(M,a_{f}(n))=1$, then the set of prime divisors of $M$ is contained in $\mathcal{P}_{n}$.
Thus, if there is a square-free integer $n_i\nmid N$, such that $M\in B_{d,n_im_i^2}^{sq}$ and $(M,a_{f}(n_im_i^2))=1$ for some positive integer $m_i$, then $M\in \mathcal{N}(f)$.
From this, we have
\begin{equation*}
    \begin{aligned}
    \#\{M\leq X : M\in B_{d}^{sq}\cap \mathcal{N}(f) \}&\geq \#\left\{M\leq X : M\in \cup_{i=1}^{t} B_{d,n_im_i^2}^{sq} \right\}\\
    &-\sum_{i=1}^{t}\#\left\{M\leq X : M\in B_{d,n_im_i^2}^{sq} \text{ and }\left(M,a_{f}(n_im_i^2)\right)\neq 1 \right\}.
    \end{aligned}
\end{equation*}
By Landau's result \cite[Chapter 56]{LA}, we deduce that for each positive integer $n$,
\begin{equation*}
    \begin{aligned}
    \#\left\{M\leq X : M\in B_{d,n}^{sq} \text{ and }\left(M,a_{f}(n)\right)\neq 1 \right\}&\leq \sum_{j=1}^{u} \#\left\{M\leq \frac{X}{p_{j}} :M\in B_{d-1,n}^{sq} \right\}\\
    &=O\left(\frac{X(\log \log X)^{d-2}}{\log X} \right),
    \end{aligned}
\end{equation*}
where $\{p_1,\dots, p_{u}\}$ is the set of prime divisors of $a_{f}(n)$.
Then, we have
\begin{equation*}
    \begin{aligned}
    \liminf_{X\to \infty}\frac{\#\{ M\leq X : M\in B_{d}^{sq}\cap \mathcal{N}(f) \}}{\pi_{d,P}(X)}&\geq  \liminf_{X\to \infty} \frac{\#\left\{ M\leq X : M\in \cup_{i=1}^{t} B_{d,n_im_i^2}^{sq} \right\}}{\pi_{d,P}(X)}\\
    &=\left(1-\left(1-\frac{1}{2^d}\right)^t\right)\left(\frac{1}{2}\right)^{d(s(f)-1)}\\
    &\geq(1-\epsilon)\left(\frac{1}{2}\right)^{d(s(f)-1)}.
    \end{aligned}
\end{equation*}
Therefore, the proof is complete.
\end{proof}

Let us recall that the generating function of $p(n)$ is given by
\[ \sum_{n=0}^{\infty} p(n)q^n=\prod_{n=1}^{\infty} \frac{1}{1-q^n}. \]
For convenience, let $p(n):=0$ if $n$ is not a non-negative integer.
From this, we have
\[ \frac{1}{\eta(24z)}=\sum_{n=-1}^{\infty} p\left(\frac{n+1}{24}\right)q^n. \]
Note that $\frac{1}{\eta(24z)}$ is a weakly holomorphic modular form of weight $-\frac{1}{2}$ on $\Gamma_0(576)$ with nebentypus. This fact will permit us to use Theorem \ref{thm : thm of weak} to get the proof of Theorem \ref{thm : thm 2}.
\begin{proof}[Proof of Theorem \ref{thm : thm 2}]
Let $f(z):=\frac{1}{\eta(24z)}$ and note that $\Delta(z):=\eta^{24}(z)$ is a cusp form of weight $12$ on $\SL_2(\mathbb{Z})$.
For each $\rho:=\left(\begin{smallmatrix}
a & b\\
c & d
\end{smallmatrix}\right)\in \SL_2(\mathbb{Z})$,
we deduce by \cite[Lemma 3.2.5]{LI} that
\[ \Delta(24z)|_{12}\rho=q^{\frac{(c,24)^2}{24}}\left(a_0+a_1 q^{\frac{(c,24)^2}{24}}+\cdots \right), \]
where $a_0\neq 0$ and $a_1\in \CC$.
From this, we obtain
\begin{equation}\label{e: Fourier exp of eta0}
    f|_{-\frac{1}{2}} \rho=q^{-\frac{(c,24)^2}{576}}\left(a_{f}\left(\rho : -\frac{(c,24)^2}{576} \right)+a_{f}\left(\rho : \frac{23(c,24)^2}{576}\right)q^{\frac{(c,24)^2}{24}}+\cdots\right).
\end{equation}
It follows that
\begin{equation}\label{eq 4}
    \Omega(f)\subset \left\{-\frac{(c,24)^2}{576} : c\in \mathbb{Z}\right\}.
\end{equation}
Thus, $\{-1\}$ is a set of representatives of $\overline{\Omega(f)}$.
Therefore, Theorem \ref{thm : thm 2} follows from Theorem \ref{thm : thm of weak} since $s(f)=1$, which makes the right side of the inequality in Theorem \ref{thm : thm of weak} equal to one.
\end{proof}
As another application of Theorem \ref{thm : thm of weak}, we study the distribution of the number of generalized Frobenius partitions with $h$-colors modulo an integer $M$.
For convenience, let $c\phi_h(n):=0$ if $n$ is not a non-negative integer.
Recall that the generating function of $c\phi_h(n)$ is given by
\[  C\Phi_h(q):=\sum_{n=0}^{\infty} c\phi_h(n)q^n=\frac{1}{\prod_{n=1}^{\infty}(1-q^n)^h}\sum_{m\in \mathbb{Z}^{h-1}}q^{Q(m)}, \]
where $Q(m_1,\dots, m_{h-1}):=\sum_{i=1}^{h-1} m_i^2+\sum_{1\leq i<j\leq h-1} m_im_j$.
Let $A_{h}:=12(I_{h-1}+J_{h-1})$ and $g_{h}(z):=\sum_{m\in \mathbb{Z}^{h-1}} q^{m^{\top}A_{h}m}$, where $I_{h-1}$ is the identity matrix of size $h-1$ and $J_{h-1}$ is the matrix of size $(h-1)\times(h-1)$ with all entries being $1$. Then,
\[ q^{-h}C\Phi_h(q^{24})=\sum_{n=-h}^{\infty} c\phi_{h}\left(\frac{n+h}{24}\right)q^n=\frac{g_h(z)}{\eta^{h}(24z)}.\]

For $v\in \ZZ^{h-1}$, let
\[ \theta(z;v,2A_h,24h):=\sum_{\substack{w\in \ZZ^{h-1} \\ w\equiv v\pmod{24h}}}e\left(\frac{w^{\top}A_hw}{576h^2}z\right).\]
Then, we have
\[ g_h(z)=\theta(z;0,2A_h,24h). \]
Let $\rho:=\left(\begin{smallmatrix}
a & b\\
c & d
\end{smallmatrix}\right)$ be in $\SL_2(\ZZ)$.
Note that for $w,w_0$ and $x$ in $\ZZ^{h-1}$, we have
\begin{multline*}
    e\left(\frac{576ah^2 x^{\top}A_h x+48h(w+24hw_0)^{\top}A_h x+d(w+24hw_0)^{\top}A_h(w+24hw_0)}{576ch^2}\right)\\
    =e\left(\frac{576ah^2 (x+dw_0)^{\top}A_h (x+dw_0)+48hw^{\top}A_h (x+dw_0)+dw^{\top}A_hw}{576ch^2}\right).
\end{multline*}
Thus, for $w\in \ZZ^{h-1}$ with $A_{h}w\equiv 0\pmod{12h}$, let $\overline{w}$ be the image of $w$ in $(\ZZ/24h\ZZ)^{h-1}$ and
\begin{equation}\label{e : eqfix4}
    \begin{aligned}
    \Phi_{\rho}(0,\overline{w}):=\sum_{x\in (\ZZ/c\ZZ)^{h-1}}e\left(\frac{576ah^2 x^{\top}A_h x+48hw^{\top}A_h x+dw^{\top}A_hw}{576ch^2} \right).
    \end{aligned}
\end{equation}
By the transformation formula for theta series in \cite[Chapter 4.9]{MI}, we have
\begin{equation*}
    \begin{aligned}
    g_h|_{\frac{h-1}{2}}\rho&=A(\rho)\sum_{\substack{\overline{v}\in (\ZZ/24h\ZZ)^{h-1}\\ A_h v\equiv 0\pmod{12h}}}\Phi_{\rho}(0,\overline{v})\theta(z;v,2A_h,24h)\\
    &=A(\rho)\sum_{\substack{\overline{v}\in (\ZZ/24h\ZZ)^{h-1}\\ A_h v\equiv 0\pmod{12h}}}\Phi_{\rho}(0,\overline{v})\left(\sum_{\substack{w\in \ZZ^{h-1} \\ \overline{w}= \overline{v}}}e\left(\frac{w^{\top}A_hw}{576h^2}z\right) \right)\\
    &=A(\rho) \sum_{\substack{w\in \ZZ^{h-1}\\ A_hw\equiv 0\pmod{12h}}} \Phi_{\rho}(0,\overline{w})e\left(\frac{w^{\top}A_hw}{576h^2}z\right),
    \end{aligned}
\end{equation*}
where $A(\rho)\neq 0$.
Note that for $w\in \ZZ^{h-1}$ with $A_hw\equiv 0\pmod{12h}$, we have
\[ \frac{w^{\top}A_hw}{576h^2}\in \frac{1}{48h}\mathbb{Z}. \]
Thus, we have
\begin{equation}\label{e : eqfix2}
   g_{h}|_{\frac{h-1}{2}}\rho=\sum_{n\gg -\infty} a_{g_h}\left(\rho: \frac{n}{48h}\right)q^{\frac{n}{48h}},
\end{equation}
where
\[ a_{g_h}\left(\rho : \frac{n}{48h}\right)=A(\rho)\sum_{\substack{w\in \ZZ^{h-1}\\ A_hw\equiv 0\pmod{12h}\\ w^{\top}A_h w =12nh}} \Phi_{\rho}(0,\overline{w}).\]

Let us note that
\[ 12h(A_{h})^{-1}\in M_{h-1}(\mathbb{Z})\] since $\det(I_{h-1}+J_{h-1})=h$.
Thus, $g_h(z)=\theta(z;0,2A_h,24h)$ is a weakly holomorphic modular form of weight $\frac{h-1}{2}$ on $\Gamma_0(48h)$ with nebentypus (for details, see \cite[Theorem 4.9.3]{MI}).
Since
\[q^{-h}C\Phi_{h}(q^{24})=\frac{g_h(z)}{\eta^{h}(24z)},\]
it follows that $q^{-h}C\Phi_{h}(q^{24})$ is a weakly holomorphic modular form of weight $-\frac{1}{2}$ on $\Gamma_0(576h)$ with nebentypus.

With the modular property of $q^{-h}C\Phi_{h}(q^{24})$ in hand, we prove Theorem \ref{thm : thm 3}.
\begin{proof}[Proof of Theorem \ref{thm : thm 3}]
To prove this theorem, we apply Theorem \ref{thm : thm of weak} to $q^{-h}C\Phi_{h}(q^{24})=\frac{g_h(z)}{\eta^{h}(24z)}$. Thus, to get $\overline{\Omega(q^{-h}C\Phi_{h}(q^{24}))}$,
let us consider the Fourier expansions of $\frac{1}{\eta^{h}(24z)}|_{-\frac{h}{2}}\rho$ and $g_{h}|_{\frac{h-1}{2}}\rho$ for $\rho:=\left(\begin{smallmatrix}
a & b\\
c &d
\end{smallmatrix}\right)\in \SL_2(\ZZ)$.

By \eqref{e: Fourier exp of eta0}, we have
\begin{equation}\label{e: Fourier exp of eta}
    \frac{1}{\eta^{h}(24z)}|_{-\frac{h}{2}} \rho=q^{-\frac{(c,24)^2}{576}h}\left(b_0+b_1q^{\frac{(c,24)^2}{24}}+\cdots\right),
\end{equation}
with $b_0\neq 0$ and $b_1\in \CC$.
Combining \eqref{e : eqfix2} and \eqref{e: Fourier exp of eta}, we obtain
\begin{equation}\label{e : eq fix1}
    q^{-h}C\Phi_{h}(q^{24})|_{-\frac{1}{2}}\rho=\alpha q^{-\frac{(c,24)^2}{576}h}\left(b_0+b_1q^{\frac{(c,24)^2}{24}}+\cdots\right)\left(\sum_{n=0}^{\infty} a_{g_h}\left(\rho : \frac{n}{48h}\right)q^{\frac{n}{48h}}\right)
\end{equation}
with $\alpha\neq 0$.
Thus, $q^{-h}C\Phi_{h}(q^{24})|_{-\frac{1}{2}}\rho$ has the Fourier expansion of the form
\[
 q^{-h}C\Phi_{h}(q^{24})|_{-\frac{1}{2}}\rho=\alpha b_0a_{g_h}(\rho : 0)q^{-\frac{(c,24)^2}{576}h}+\cdots,
\]
and for every $\rho$, the period of $q^{-h}C\Phi_{h}(q^{24})|_{-\frac{1}{2}}\rho$ divides $576h$. Thus,
\[ \Omega(q^{-h}C\Phi_{h}(q^{24})) \subset \left\{-\frac{j}{576h} : \text{ positive integers } j\leq 576h^2 \right\}. \]
Hence, we conclude that $\# \overline{\Omega(q^{-h}C\Phi_{h}(q^{24}))}\leq 576h^2$.

In particular, when $h\in \{2,3\}$, we prove $\#\overline{\Omega(q^{-h}C\Phi_{h}(q^{24}))}=1$.
Assume that $h=2$. Since $h<24$, we have $\frac{(c,24)^2}{24}-\frac{(c,24)^2h}{576}>0$.
By \eqref{e : eq fix1}, the function $q^{-h}C\Phi_{h}(q^{24})|_{-\frac{1}{2}}\rho$ has the Fourier expansion of the form
\[ q^{-h}C\Phi_{h}(q^{24})|_{-\frac{1}{2}}\rho=\alpha b_0\sum_{n=0}^{\infty} a_{g_h}\left(\rho : \frac{n}{48h}\right)q^{\frac{n}{48h}-\frac{(c,24)^2h}{576}} + \sum_{m=1}^{\infty} c\left(\rho:\frac{m}{576h}\right)q^{\frac{m}{576h}}, \]
where $c\left(\rho:\frac{m}{576h}\right)\in \CC$.
It follows that
\begin{equation}\label{e : eqfix5}
    \Omega\left(q^{-h}C\Phi_{h}(q^{24})\right)=\left\{\frac{n}{48h}-\frac{(c,24)^2h}{576}<0 : n\in \mathbb{Z} \, \text{ and } a_{g_h}\left(\rho : \frac{n}{48h}\right)\neq 0 \text{ for some }\rho\in \SL_2(\ZZ) \right\}.
\end{equation}

Considering the Fourier expansion of $g_{h}|_{\frac{h-1}{2}}\rho$ in \eqref{e : eqfix2}, we have that if $a_{h}(\rho:\frac{n}{48h})\neq 0$, then there exists $w\in \ZZ^{h-1}$ such that
\begin{equation}\label{e : eqfix3}
    A_{h}w\equiv 0\pmod{12h}, \ w^{\top}A_{h}w=12nh, \text{ and } \Phi_{\rho}(0,\overline{w})\neq 0.
\end{equation}
By the definition of $\Phi_{\rho}(0,\overline{w})$ in \eqref{e : eqfix4}, we have
\begin{equation*}
    \begin{aligned}
    \Phi_{\rho}(0,\overline{w})&=e\left(\frac{dw^2}{96c} \right)\sum_{x=0}^{c-1}e\left(\frac{24ax^2+wx}{c}\right)\\
    &= e\left(\frac{dw^2}{96c}\right)\sum_{v=0}^{\frac{c}{(c,24)}-1}\sum_{u=0}^{(c,24)-1}e\left(\frac{1}{c}\left(24a\left(\frac{c}{(c,24)}u+v\right)^2+w\left(\frac{c}{(c,24)}u+v \right) \right) \right)\\
    &=e\left(\frac{dw^2}{96c}\right)\sum_{v=0}^{\frac{c}{(c,24)}-1}\sum_{u=0}^{(c,24)-1}e\left(\frac{24acu^2}{(c,24)^2}+\frac{48auv}{(c,24)}+\frac{24av^2}{c}+\frac{wu}{(c,24)}+\frac{wv}{c} \right).
    \end{aligned}
\end{equation*}
Since $(c,24)^2\mid 24c$ and $(c,24)\mid 48$, we have
\begin{equation*}
\begin{aligned}
\Phi_{\rho}(0,\overline{w})&=e\left(\frac{dw^2}{96c}\right)\sum_{v=0}^{\frac{c}{(c,24)}-1}\sum_{u=0}^{(c,24)-1} e\left(\frac{24av^2}{c}+\frac{wu}{(c,24)}+\frac{wv}{c}\right)\\
&=e\left(\frac{dw^2}{96c}\right)\sum_{v=0}^{\frac{c}{(c,24)}-1}e\left(\frac{24av^2}{c}+\frac{wv}{c}\right) \sum_{u=0}^{(c,24)-1}e\left(\frac{wu}{(c,24)}\right).
\end{aligned}
\end{equation*}
Thus, if $\Phi_{\rho}(0,\overline{w})\neq 0$, then $(c,24)\mid w$.
Since $h=2$ and $A_h=24$, the condition \eqref{e : eqfix3} implies that
\[
w^2=n \text{ and } (c,24)\mid w.
\]
Thus, we have by \eqref{e : eqfix5} and \eqref{e : eqfix3} that every element in $\Omega\left(q^{-2}C\Phi_{2}(q^{24})\right)$ is equal to $\frac{n}{96}-\frac{(c,24)^2}{288}$ for some positive integers $c$ and $n$ such that there exists an integer $w$ such that $n=w^2$ and $(c,24)\mid w$.
Note that if $w$ is a non-zero integer with $(c,24)\mid w$, then $\frac{w^2}{96}-\frac{(c,24)^2}{288}$ is positive.
Thus, we conclude that
\[ \Omega\left(q^{-2}C\Phi_2(q^{24})\right)\subset \left\{-\frac{(c,24)^2}{288} : c\in \mathbb{Z}\right\}. \]
Therefore, $\{-\frac{1}{2}\}$ is a set of representatives of $\overline{\Omega(q^{-2}C\Phi_2(q^{24}))}$.
Hence Theorem \ref{thm : thm of weak} implies the desired result pertaining to $h = 2$ in Theorem \ref{thm : thm 3}.

Similar to the case $h=2$, we have
\[ \Omega(q^{-3}C\Phi_{3}(q^{24}))\subset\left\{-\frac{(c,24)^2}{192} : c\in \mathbb{Z}\right\}\cup \left\{-\frac{(c,24)^2}{1728} : c\in \mathbb{Z}\right\}. \]
We therefore omit the proof of the case $h=3$.
\end{proof}

\subsection*{Acknowledgments.}  The authors are grateful to the referees for their careful reading and helpful comments, which have improved the previous version of this paper. 
The first author was supported by the National Research Foundation of Korea (NRF) grant funded by the Korea government (MSIT)(RS-2024-00334203).
The second author was supported by a KIAS Individual Grant (MG086302) at Korea Institute for Advanced Study.


\begin{thebibliography}{1}
\bibitem{AH} S. Ahlgren, \textit{Distribution of the partition function modulo composite integers M}, Math. Ann. 318 (2000), no. 4, 795–803.
\bibitem{AAD} S. Ahlgren, N. Anderson and R. Dicks, \textit{The Shimura lift and congruences for modular forms with the eta multiplier}, Adv. Math. 447 (2024), Paper No. 109655, 58 pp.
\bibitem{AB1} S. Ahlgren and M. Boylan, \textit{Arithmetic properties of the partition function}, Invent. Math. 153 (2003), no. 3, 487–502.
\bibitem{AB2} S. Ahlgren and M. Boylan, \textit{Coefficients of half-integral weight modular forms modulo $\ell^j$}, Math. Ann. 331 (2005), no. 1, 219–239.
\bibitem{AN} G. E. Andrews, \textit{Generalized Frobenius partitions}, Mem. Amer. Math. Soc. 49(301) (1984).
\bibitem{AN2} G. E. Andrews, \textit{The Theory of Partitions}, Cambridge Univ. Press, Cambridge, 1984.
\bibitem{AG} G. E. Andrews and F. Garvan, \textit{Dyson’s crank of a partition}, Bull. Amer. Math. Soc. (N.S.) 18(2) (1988) 167–171.
\bibitem{AT} A. O. L. Atkin, \textit{Multiplicative congruence properties and density problems for $p(n)$}, Proc. London Math. Soc. (3) 18 (1968), 563–576.
\bibitem{AO2} A. O. L. Atkin and J. N. O'Brien, \textit{Some properties of $p(n)$ and $c(n)$ modulo powers of $13$}, Trans. Amer. Math. Soc. 126 (1967) 442–459.
\bibitem{AS} A. O. L. Atkin and H. P. F. Swinnerton-Dyer, \textit{Some properties of partitions}, Proc. Lond. Math. Soc. 66(4) (1954) 84–106.
\bibitem{BS} N. D. Baruah and B. K. Sarmah, \textit{Generalized Frobenius partitions with $6$ colors}, Ramanujan J. 38 (2015), no. 2, 361–382.
\bibitem{BO} J. Bruinier and K. Ono, \textit{Coefficients of half-integral weight modular forms}, J. Number Theory 99 (2003), no. 1, 164–179.
\bibitem{CWL} H. H. Chan, L. Wang and Y. Yang, \textit{Modular forms and $k$-colored generalized Frobenius partitions}, Trans. Amer. Math. Soc. 371 (2019), no. 3, 2159–2205.
\bibitem{CKNS} H. Cho, B. Kim, H. Nam and J. Sohn, \textit{A survey on $t$-core partitions}, Hardy-Ramanujan J. 44 (2021), 81–101.
\bibitem{CGH} S. -P. Cui, N. S. S. Gu and A. X. Huang, \textit{Congruence properties for a certain kind of partition functions}, Adv. Math. 290 (2016), 739–772.
\bibitem{D} P. Deligne, \textit{Formes modularies et re\'{p}resentations $\ell$-adiques}, Séminaire Bourbaki. Vol. 1968/69: Exposés 347–363, Exp. No. 355, 139–172, Lecture Notes in Math., 175, Springer, Berlin, 1971.
\bibitem{DS} D. S. Dummit and R. M. Foote, \textit{Abstract algebra}, Third edition. John Wiley and Sons, Inc., Hoboken, NJ, 2004. xii+932 pp.
\bibitem{EI} P. Erd\H{o}s and A. Ivi\'{c}, \textit{The distribution of values of a certain class of arithmetic functions at consecutive integers}, Number theory, Vol. I (Budapest, 1987), 45–91. Colloq. Math. Soc. János Bolyai, 51.
\bibitem{FKO} A. Folsom, Z. Kent and K. Ono, \textit{$\ell$-adic properties of the partition function}, Adv. Math. 229 (2012) 1586–1609.
\bibitem{GS} F. G. Garvan and J. A. Sellers, \textit{Congruences for generalized Frobenius partitions with an arbitrarily large number of colors}, Integers 14 (2014), Paper No. A7, 5 pp.
\bibitem{GWX} C. Gu, L. Wang and E. X. W. Xia, \textit{Congruences modulo powers of 3 for generalized Frobenius partitions with six colors}, Acta Arith. 175 (2016), no. 3, 291–300.
\bibitem{KO1} I. Kiming and J. Olsson, \textit{Congruences like Ramanujan's for powers of the partition function}, Arch. Math. (Basel) 59 (1992), no. 4, 348–360.
\bibitem{KL} T. Kl\o{}ve, \textit{Recurrence formulae for the coefficients of modular forms and congruences for the partition function and for the coefficients of $j(\tau)$, $(j(\tau)-1728)^{1/2}$ and $(j(\tau))^{1/3}$}, Math. Scand. 23 (1969), 133–159 (1969).
\bibitem{KO} O. Kolberg, \textit{Note on the parity of the partition function}, Math. Scand. 7 (1959), 377–378.
\bibitem{LA} E. Landau, \textit{Handbuch der Lehre von der Verteilung der Primzahlen}, 2 Bände. (German) 2d ed. With an appendix by Paul T. Bateman. Chelsea Publishing Co., New York, 1953. xviii+pp. 1–564; ix+pp. 565–1001.
\bibitem{LI} G. Ligozat, \textit{Courbes modulaires de genre 1}, Bull. Soc. Math. France, Mém., vol.43, Société Math-ématique de France, Paris, 1975, Supplément au Bull. Soc. Math. France Tome 103, no. 3.
\bibitem{L} B. L. S. Lin, \textit{New Ramanujan type congruence modulo 7 for 4-colored generalized Frobenius partitions}, Int. J. Number Theory 10 (2014), no. 3, 637–639.
\bibitem{Love} J. Lovejoy, \textit{Ramanujan-type congruences for three colored Frobenius partitions}, J. Number Theory 85 (2000), no. 2, 283–290. 
\bibitem{LO} J. Lovejoy and K. Ono, \textit{Extension of Ramanujan’s congruences for the partition function modulo powers of $5$}, J. Reine Angew. Math. 542 (2002) 123–132.
\bibitem{MI} T. Miyake, \textit{Modular forms}, Springer-Verlag, Berlin, 1989. x+335 pp.
\bibitem{NEU} J. Neukirch, \textit{Algebraic number theory}, Translated from the 1992 German original and with a note by Norbert Schappacher. With a foreword by G. Harder., Grundlehren der mathematischen Wissenschaften [Fundamental Principles of Mathematical Sciences], 322. Springer-Verlag, Berlin, 1999. xviii+571 pp.
\bibitem{NE} M. Newman, \textit{Periodicity modulo m and divisibility properties of the partition function}, Trans. Amer. Math. Soc. 97 (1960), 225–236.
\bibitem{ON1} K. Ono, \textit{Distribution of the partition function modulo $m$}, Ann. of Math. (2) 151 (2000), no. 1, 293–307.
\bibitem{PR} P. Paule and S. Radu, \textit{The Andrews-Sellers family of partition congruences}, Adv. Math. 230 (2012), no. 3, 819–838.
\bibitem{P} S. Petta Mestrige, \textit{Congruences for a class of eta-quotients and their applications}, Ramanujan J. 58 (2022), no. 2, 407–433.
\bibitem{RA1} S. Ramanujan, \textit{Some properties of $p(n)$; the number of partitions of $n$}, Proc. Cambridge Philos. Soc. 19 (1919) 207–210.
\bibitem{RA2} S. Ramanujan, \textit{Congruence properties of partitions}, Proc. Lond. Math. Soc. 18 (1920).
\bibitem{RA} S. Ramanujan, \textit{Congruence properties of partitions}, Math. Z. 9 (1921), no. 1-2, 147–153.
\bibitem{SE} J. -P. Serre, \textit{Divisibilit\'{e} de certaines fonctions arithm\'{e}tiques}, L’Ensein. Math. 22 (1976), 227–260.
\bibitem{SH} G. Shimura, \textit{On modular forms of half integral weight}, Ann. of Math. (2) 97 (1973), 440–481.
\bibitem{TE} G. Tenenbaum, \textit{Introduction to analytic and probabilistic number theory}, Translated from the second French edition (1995) by C. B. Thomas. Cambridge Studies in Advanced Mathematics, 46. Cambridge University Press, Cambridge, 1995. xvi+448 pp.
\bibitem{TR} S. Treneer, \textit{Quadratic twists and the coefficients of weakly holomorphic modular forms}, J. Ramanujan Math. Soc. 23 (2008), no. 3, 283–309.
\bibitem{V} M. F. Vign\'{e}ras, \textit{Facteurs gamma et équations fonctionnelles}, pp. 79–103. Lecture Notes in Math., Vol. 627, Springer, Berlin, 1977.
\bibitem{WZ} C. Wang and W. Zhang, \textit{A unified proof of Ramanujan-type congruences modulo 5 for 4-colored generalized Frobenius partitions}, J. Math. Anal. Appl. 447 (2017), no. 1, 542–549.
\end{thebibliography}
\end{document}